% ----------------------------------------------------------------
% AMS-LaTeX Paper ************************************************
% **** -----------------------------------------------------------

\documentclass[11pt]{article}
\usepackage[latin1]{inputenc}
\usepackage{amscd}
\usepackage{amsfonts}
\usepackage{amsmath}
\usepackage{amssymb}
\usepackage{amsthm}

\usepackage{fancyhdr}
\usepackage{graphicx}
\usepackage{hyperref}
\usepackage{indentfirst}
\usepackage{latexsym}
\usepackage{mathrsfs}
\usepackage{xypic}

%============Kurusch
\usepackage{color}
%============Kurusch

\usepackage[all]{xy}
%%%

\usepackage[top=1in,bottom=1in,left=1.25in,right=1.25in]{geometry}

\textwidth 15cm \textheight 22cm \oddsidemargin 0in
\def\<{\langle}
\def\>{\rangle}

\def\K{\mathbb{K}}

\newtheorem{df}{Definition}[section]
\newtheorem{thm}{Theorem}[section]
\newtheorem{cor}{Corollary}[section]
\newtheorem{rem}{Remark}[section]

\newtheorem{prop}{Proposition}[section]
\newtheorem{exa}{Example}[section]

\setlength{\unitlength}{1mm}

\begin{document}
\date{}
\title{(Non-BiHom-Commutative) BiHom-Poisson algebras}
\author{{ Hadjer  Adimi$^{1}$, Hanene  Amri$^{2}$, Sami  Mabrouk$^{3}$, Abdenacer  Makhlouf$^{4}$ }\\
{\small 1. Universit\'e Mohamed El Bachir El Ibrahimi de Bordj Bou Arr\'eridj El-Anasser, 34030 - Algeria,}  \\
{\small E-mail: h.adimi@univ-bba.dz}\\
{\small 2. Universit\'e Badji Mokhtar Annaba' BP 12 d\'epartement de math\'ematiques, facult\'e des sciences, Algeria }\\
{\small E-mail: akian296@yahoo.fr}\\
{\small 3.  University of Gafsa, Faculty of Sciences Gafsa, 2112 Gafsa, Tunisia}\\
{\small E-mail: mabrouksami00@yahoo.fr}\\
{\small 4.~ IRIMAS - D\'epartement de Math\'ematiques, 6, rue des fr\`eres Lumi\`ere,
F-68093 Mulhouse, France}\\ 
{\small E-mail: Abdenacer.Makhlouf@uha.fr}}
 \maketitle{}

 % ----------------------------------------------------------------
 \begin{abstract}
 The aim of this paper is to  introduce and study BiHom-Poisson algebras,  in particular Non-BiHom-Commutative BiHom-Poisson algebras. We discuss their  representation theory and Semi-direct product.  Furthermore, we characterize admissible BiHom-Poisson algebras. Finally, we establish the classification of 2-dimensional BiHom-Poisson algebras.
\end{abstract}
\maketitle
\section{Introduction}
 A vector space $A$ is called a Poisson algebra provided that, beside addition, it has two $\mathbb{K}$-bilinear
operations which are related by derivation. First, with respect to multiplication, $A$
is a commutative associative algebra; denote the multiplication by $\mu(a,b)$ (or $a\cdot b$ or $ab$), where $a, b \in A$. Second, $A$ is a Lie algebra; traditionally here the Lie operation
is denoted by the Poisson brackets \{a,b\}, where $a, b \in A$. It is also assumed that
these two operations are connected by the Leibniz rule
$\{a\cdot b,c\} $= $a \cdot \{b,c\} +b \cdot \{a, c\}$, $a$, $b$, $c \in A$ \cite{gr,kubo}. Poisson algebras are the key to recover Hamiltonian mechanics and are also central in the study of quantum groups. Manifolds with a Poisson algebra structure are known as Poisson manifolds, of which the symplectic manifolds and the Poisson-Lie groups are a special case.  Their  generalization is  known as Nambu algebras \cite{n:generalizedmech,f:nliealgebras,GDito,GDitoF}, where the binary bracket is generalized to ternary or $n$-ary bracket.
A Hom-algebra structure is a multiplication on a vector space where  a usual  structure is twisted by a homomorphism \cite{ms}. The first motivation to study nonassociative Hom-algebras comes from quasi-deformations of Lie algebras of
vector fields, in particular $q$-deformations of Witt and Virasoro algebras. The structure of  (Non-Commutative)-Hom-Poisson algebras are twisted generalization of (Non-Commutative)-Poisson algebras \cite{Yau:Noncomm}. A (Non-Commutative)-Hom-Poisson algebra $A$ is defined by a  linear self-map $\alpha$, and two binary operations $\{, \}$ (the Hom-Lie bracket) and $\mu$ (the Hom-associative product). The associativity, the Jacobi identity, and the Leibniz identity in a (Non-Commutative)-Poisson algebra are replaced by their Hom-type (i.e. $\alpha$-twisted) identities. Motivated by a categorical study of Hom-algebras and new type of categories,    generalized algebraic structures endowed with two commuting multiplicative linear maps, called BiHom-algebras including BiHom-associative algebras, BiHom-Lie algebras and BiHom-Bialgebras were introduced in \cite{GrazianiMakhloufMeniniPanaite}. Therefore,   when the two linear maps are the same, BiHom-algebras will be turn to Hom-algebras in some cases. Various studies deal with these new  type of algebras, see \cite{RepBiHomLie,luimakhlouf,luimakhlouf2} and references therein. 

The purpose of this paper is to study  (Non-Commutative) BiHom-Poisson algebras. The paper is organized as follows. In Section 2, we review the definition of BiHom-associative and BiHom-Lie algebras and then  generalize the Poisson algebra notion to BiHom case. This new structure is  illustrated with some examples. In Section 3, we study the concept of module of BiHom-Poisson algebra, which is based on  BiHom-modules of BiHom-associative and BiHom-Lie algebras.   Then we define semi-direct product of (Non-Commutative) BiHom-Poisson algebras. In Section 4, we describe  BiHom-Poisson algebras using only one binary operation and the twisting maps via the polarization-depolarization process. We show that, admissible BiHom-Poisson algebras, and only these BiHom-algebras, give rise to BiHom-Poisson algebras via polarization. In the last section, we give the classification of 2-dimensional BiHom-Poisson algebras.

% ----------------------------------------------------------------
\section{Definitions and Examples }
In this section, we recall some basic definitions about BiHom-associative and  BiHom-Lie algebras \cite{GrazianiMakhloufMeniniPanaite} and then we generalize  the Poisson algebras notion to BiHom case. We assume
 that $\K$ will denote a commutative field of characteristic zero.
 \begin{df}
   A BiHom-associative algebra is a quadruple $(A,\mu,\alpha,\beta)$ consisting of vector space $A$, a bilinear mapping
$\mu:A\times A\rightarrow A$ and two  homomorphisms $\alpha,\beta:A\rightarrow A$ such that for   $x,y,z\in A$ we have
\begin{eqnarray}
% \nonumber to remove numbering (before each equation)
&& \alpha  \beta = \beta  \alpha,\nonumber\\
&& \alpha \circ\mu=\mu\circ\alpha^{\otimes^2},~~\beta \circ\mu=\mu\circ\beta^{\otimes^2},\nonumber\\
&&\mu(\alpha(x),\mu(y,z))=\mu(\mu(x,y),\beta(z))~~(\textrm{BiHom-associative\ condition}),\label{Bihom associative}
\end{eqnarray}
where $\alpha\beta =\alpha\circ\beta$.
 \end{df}
 We recall that the BiHom-commutative condition is $\mu(\beta(x),\alpha(y))=\mu(\beta(y),\alpha(x))$, for all $x,y \in A$.
\begin{df}    A  BiHom-Lie  algebra is a quadruple $(A,[\cdot,\cdot],\alpha,\beta)$ consisting of vector space $A$, a bilinear mapping
$[.,.]:A\times A\rightarrow A$ and two  homomorphisms $\alpha,\beta:A\rightarrow A$ such that for   $x,y,z\in A$ we have
\begin{eqnarray}
% \nonumber to remove numbering (before each equation)
&& \alpha  \beta = \beta  \alpha,\nonumber\\
&& \alpha ([x,y])=[\alpha(x),\alpha(y)],~~\beta ([x,y])=[\beta(x),\beta(y)],\nonumber\\
&& [\beta(x),\alpha(y)]=-[\beta(y),\alpha(x)], \ (\textrm{BiHom-skew-symmetric})\label{anti-biho}\\
&& \circlearrowleft_{x,y,z}[\beta^2(x),[\beta(y),\alpha(z)]]= 0~~(\textrm{BiHom-Jacobi\ condition}),\label{Bihom jacobi}
\end{eqnarray}
where $\circlearrowleft_{x,y,z}$ denotes summation over the cyclic permutation on $x,y,z$.

\end{df}
If $\alpha$ is a bijective morphism, then the identity \eqref{Bihom jacobi} can be written \begin{equation}\label{Bihom jacobi1}
  [\beta^2(x),[\beta(y),\alpha(z)]]=[[\alpha^{-1}\beta^2(x),\beta(y)],\alpha\beta(z)]+[\beta^2(y),[\beta(x),\alpha(z)]].
\end{equation}

\begin{df} A Poisson algebra is a triple $(A, \{\cdot,\cdot\}, \mu)$ consisting of a vector space $A$ and two bilinear maps $\{\cdot,\cdot\},\ \mu : A\times A \longrightarrow A$  satisfying
\begin{enumerate}
\item $(A, \{\cdot,\cdot\})$ is a Lie algebra,
\item $(A, \mu)$ is a commutative associative algebra,
\item for all $x, y \in A$ :
\begin{equation}\label{a}
\{\mu(x,y),z\} = \mu(\{x, z\},y)+\mu(x, \{y,z\})\ (\textrm{Compatibility\ identity}).
\end{equation}
\end{enumerate}
If $\mu$ is non-commutative then  $(A, \{\cdot,\cdot\}, \mu)$ is a non-commutative Poisson algebra.
\end{df}
\begin{df} A BiHom-Poisson algebra is a 5-uple $(A, \{\cdot,\cdot\}, \mu, \alpha,\beta)$ consisting of a vector space $A$, two bilinear maps $\{\cdot,\cdot\},\ \mu : A\times A \longrightarrow A$  and two  linear maps $\alpha,\ \beta:A \longrightarrow A$ satisfying
\begin{enumerate}
\item $(A, \{\cdot,\cdot\},\alpha,\beta)$ is a BiHom-Lie algebra,
\item $(A, \mu, \alpha,\beta)$ is a BiHom-commutative BiHom-associative algebra,
\item for all $x, y \in A$ :
\begin{equation}\label{a}
\{\mu(x,y),\alpha\beta(z)\} = \mu(\{x, \beta(z)\},\alpha(y))+\mu(\alpha(x), \{y,\alpha(z)\}).
\end{equation}
\end{enumerate}
If $\mu$ is non-BiHom-commutative then  $(A, \{\cdot,\cdot\}, \mu, \alpha,\beta)$ is a non-BiHom-commutative BiHom-Poisson algebra.
\end{df}

We are using here a right handed Leibniz rule, one may call such algebras right BiHom-Poisson algebras. We refer to \cite{LMMP20} for left BiHom-Poisson algebras.

\begin{rem}
  Obviously, a BiHom-Poisson  algebra $(A, \{\cdot,\cdot\}, \mu, \alpha,\beta)$ for which $\alpha=\beta$  and $\alpha$ injective is just a
Hom-Poisson  algebra $(A, \{\cdot,\cdot\}, \mu, \alpha)$.
\end{rem}

\begin{prop} Let $(A, \mu, \alpha,\beta)$ be a BiHom-associative algebra where  $\alpha$ and $\beta$ are two bijective homomorphisms. Then the $5$-uple $(A, \{\cdot,\cdot\},\mu,\alpha,\beta)$, where the bracket is defined by
$$\{x,y\}=\mu(x,y)-\mu(\alpha^{-1}\beta(y),\alpha\beta^{-1}(x)),$$
for $x, y \in A$
is a non-commutative  BiHom-Poisson  algebra.
\end{prop}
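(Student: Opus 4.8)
The plan is to verify the three axioms of a non-BiHom-commutative BiHom-Poisson algebra for $(A,\{\cdot,\cdot\},\mu,\alpha,\beta)$ one at a time. Axiom (2) costs nothing: $(A,\mu,\alpha,\beta)$ is BiHom-associative by hypothesis, and no BiHom-commutativity is needed since we target the non-BiHom-commutative structure. Throughout I will use the consequences of $\alpha\beta=\beta\alpha$ together with bijectivity of $\alpha,\beta$, namely that all composites formed from $\alpha^{\pm1},\beta^{\pm1}$ commute, so for example $\alpha^{-1}\beta\alpha=\beta$, $\beta\alpha\beta^{-1}=\alpha$ and $\alpha^{-1}\beta\,\alpha\beta=\beta^{2}$, and that $\mu$ is compatible with $\alpha^{-1},\beta^{-1}$ as well.

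For axiom (1) I would first check that $\alpha$ and $\beta$ are bracket morphisms: applying $\alpha$ to $\{x,y\}=\mu(x,y)-\mu(\alpha^{-1}\beta(y),\alpha\beta^{-1}(x))$ and using $\alpha\circ\mu=\mu\circ\alpha^{\otimes2}$ together with the commutation of the twists gives $\alpha(\{x,y\})=\mu(\alpha(x),\alpha(y))-\mu(\beta(y),\alpha^{2}\beta^{-1}(x))=\{\alpha(x),\alpha(y)\}$, and the computation for $\beta$ is symmetric. BiHom-skew-symmetry \eqref{anti-biho} is immediate by substitution, since $\{\beta(x),\alpha(y)\}=\mu(\beta(x),\alpha(y))-\mu(\beta(y),\alpha(x))$ reverses sign under $x\leftrightarrow y$. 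For the BiHom-Jacobi identity \eqref{Bihom jacobi} I would invoke the fact, established in \cite{GrazianiMakhloufMeniniPanaite}, that the commutator bracket $\mu(x,y)-\mu(\alpha^{-1}\beta(y),\alpha\beta^{-1}(x))$ of a BiHom-associative algebra with bijective structure maps is a BiHom-Lie bracket; if one prefers a self-contained argument, expanding $\circlearrowleft_{x,y,z}\{\beta^{2}(x),\{\beta(y),\alpha(z)\}\}$ yields six triple products which, after using multiplicativity and $\alpha\beta=\beta\alpha$, all take the shape $\mu(\mu(-,-),\beta(-))$ or $\mu(\alpha(-),\mu(-,-))$, whereupon \eqref{Bihom associative} matches them in three cancelling pairs.

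The only genuinely new point is the compatibility identity, and the hard part there is purely the bookkeeping of twists. I would expand both sides into triple products. Set $L_{1}=\mu(\mu(x,y),\alpha\beta(z))$ and $L_{2}=\mu(\mu(\alpha^{-1}\beta^{2}(z),\alpha\beta^{-1}(x)),\alpha(y))$. The left-hand side $\{\mu(x,y),\alpha\beta(z)\}$ equals $L_{1}-\mu(\beta^{2}(z),\mu(\alpha\beta^{-1}(x),\alpha\beta^{-1}(y)))$ using $\beta\circ\mu=\mu\circ\beta^{\otimes2}$, $\alpha\circ\mu=\mu\circ\alpha^{\otimes2}$ and $\alpha^{-1}\beta\,\alpha\beta=\beta^{2}$; applying \eqref{Bihom associative} with $u=\alpha^{-1}\beta^{2}(z)$ in $\mu(\alpha(u),\mu(v,w))=\mu(\mu(u,v),\beta(w))$ and using $\beta\alpha\beta^{-1}=\alpha$, the subtracted term is exactly $L_{2}$, so the left-hand side is $L_{1}-L_{2}$. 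The right-hand side $\mu(\{x,\beta(z)\},\alpha(y))+\mu(\alpha(x),\{y,\alpha(z)\})$ expands directly to $R_{1}-L_{2}+R_{3}-R_{4}$ with $R_{1}=\mu(\mu(x,\beta(z)),\alpha(y))$, $R_{3}=\mu(\alpha(x),\mu(y,\alpha(z)))$ and $R_{4}=\mu(\alpha(x),\mu(\beta(z),\alpha\beta^{-1}(y)))$ (the subtracted part of the first summand being already $L_{2}$). Two applications of \eqref{Bihom associative} then give $R_{3}=\mu(\mu(x,y),\beta\alpha(z))=L_{1}$ and $R_{4}=\mu(\mu(x,\beta(z)),\beta\alpha\beta^{-1}(y))=\mu(\mu(x,\beta(z)),\alpha(y))=R_{1}$, so the right-hand side collapses to $L_{1}-L_{2}$ as well, proving the identity. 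Since BiHom-commutativity of $\mu$ is never used, the outcome is precisely a non-BiHom-commutative BiHom-Poisson algebra, and I expect no real obstacle beyond fixing a consistent notation for the composites of $\alpha^{\pm1},\beta^{\pm1}$ so that each invocation of \eqref{Bihom associative} is transparent.
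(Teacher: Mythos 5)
Your proposal is correct and follows essentially the same route as the paper: the bracket-morphism and BiHom-skew-symmetry checks, the BiHom-Jacobi identity via the commutator construction (the paper verifies it by exactly the associativity-cancellation you sketch as an alternative, though the cyclic sum actually produces twelve triple products cancelling in six pairs, not ``six in three pairs''), and the Leibniz identity by expanding and applying BiHom-associativity twice. Your explicit Leibniz computation, reducing both sides to $L_{1}-L_{2}$, is in fact carried out more carefully than the paper's displayed version, which contains a harmless typo ($\beta^{2}(z)$ where $\beta(z)$ should appear in its last term) but rests on the same cancellations.
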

\begin{proof}We show that $\alpha$ and $\beta$ are compatible with the bracket $\{\cdot,\cdot\}$ . For all $x, y \in A$, we have
\begin{eqnarray*}
\{\alpha(x),\alpha(y)\}&=& \mu(\alpha(x),\alpha(y))-\mu(\alpha^{-1}\beta(\alpha(y)),\alpha\beta^{-1}(\alpha(x)))\\
&=& \mu(\alpha(x),\alpha(y))-\mu(\beta(y),\alpha^2\beta^{-1}(x))\\
&=& \alpha(\{x,y\}).
\end{eqnarray*}
The second equality holds since $\alpha$ is even and $\alpha\circ \beta=\beta\circ \alpha$. In the same way, we check that
$\beta(\{x, y\})= \{\beta(x),\beta(y)\}$.\\
The BiHom-skew-symmetry $\{\beta(x),\alpha(y)\}=-\{\beta(y),\alpha(x)\}$ is obvious.\\
Therefore, it remains to prove the BiHom-Jacobi identity. For all $x, y,z \in A$, we have
\begin{eqnarray*}
\{\beta^2(x),\{\beta(y),\alpha(z)\}\}&=&  \mu(\beta^2(x),\mu(\beta(y),\alpha(z)))-\mu(\mu(\alpha^{-1}\beta^2(y),\beta(z)),\alpha\beta(x)) \\
&-& \mu( \beta^2(x),\mu(\beta(z),\alpha(y)))+\mu(\mu(\alpha^{-1}\beta^2(z),\beta(y)),\alpha\beta(x)).
\end{eqnarray*}
And, we have
\begin{eqnarray*}
\{\beta^2(y),\{\beta(z),\alpha(x)\}\}
&=&\mu(\beta^2(y),\mu(\beta(z),\alpha(x)))-\mu(\mu(\alpha^{-1}\beta^2(z),\beta(x)),\alpha\beta(y)) \\
&-& \mu( \beta^2(y),\mu(\beta(x),\alpha(z)))+\mu(\mu(\alpha^{-1}\beta^2(x),\beta(z)),\alpha\beta(y)) .
\end{eqnarray*}
Similarly,
\begin{eqnarray*}
\{\beta^2(z),\{\beta(x),\alpha(y)\}\}
&=& \mu(\beta^2(z),\mu(\beta(x),\alpha(y)))-\mu(\mu(\alpha^{-1}\beta^2(x),\beta(y)),\alpha\beta(z)) \\
&-& \mu( \beta^2(z),\mu(\beta(y),\alpha(x)))+ \mu(\mu(\alpha^{-1}\beta^2(y),\beta(x)),\alpha\beta(z)).
\end{eqnarray*}
By  BiHom-associativity, we find that
\begin{eqnarray*}
\circlearrowleft_{x,y,z}\{\beta^2(x),\{\beta(y),\alpha(z)\}\}&=& 0.
\end{eqnarray*}
Now, we show the compatibility condition, for $x,y,z\in P$, we have
\begin{align*}
 & \{\mu(x,y),\alpha\beta(z)\} -\mu(\{x, \beta(z)\},\alpha(y))-\mu(\alpha(x) \{y,\alpha(z)\})  \\
  = &\mu(\mu(x,y),\alpha\beta(z))-\mu(\beta^2(z),\mu(\alpha\beta^{-1}(x),\alpha\beta^{-1}(y)) -\mu(\mu(x, \beta(z)),\alpha(y))\\&+\mu(\mu(\alpha^{-1}\beta^2(z),\alpha\beta^{-1}(x)),\alpha(y))  -\mu(\alpha(x),\mu(y,\alpha(z)))+\mu(\alpha(x),\mu(\beta^2(z),\alpha\beta^{-1}(y)))=0.
\end{align*}
\end{proof}

\begin{df}
Let $(A,\mu,\{.,.\},\alpha,\beta)$ and $(A',\mu',\{.,.\}',\alpha',\beta')$ be two  BiHom-Poisson  algebras.
A linear map $f : A\rightarrow A'$ is a \emph{morphism} of BiHom-Poisson algebras if it satisfies for all $x_1,x_2\in A$:
\begin{eqnarray}
f(\{x_{1},x_{2}\}) &=& \{f(x_{1}),f(x_{2})\}' ,\\
f \circ \mu&=&\mu'\circ f^{\otimes 2},\\
f\circ \alpha &= & \alpha' \circ f.\\
f\circ \beta &= & \beta' \circ f.
\end{eqnarray}
It said to be a  \emph{weak morphism} if    hold only  the two first conditions.
\end{df}

\begin{df}
Let  $(A,\mu,\{.,.\},\alpha,\beta)$ be a BiHom-Poisson algebra.
It  is said to be  \emph{multiplicative} if
\begin{eqnarray*}
\alpha(\{x_{1},x_{2}\})&=&\{\alpha(x_{1}),\alpha(x_{2})\},\\
 \beta(\{x_{1},x_{2}\})&=&\{\beta(x_{1}),\beta(x_{2})\},\\
 \alpha \circ \mu&=&\mu \circ \alpha^{\otimes 2}.\\
 \beta \circ \mu&=&\mu \circ \beta^{\otimes 2}.
\end{eqnarray*}
It  is said to be  \emph{regular} if $\alpha$ and $\beta$ are bijective.
\end{df}

\begin{prop}\label{twist}
  Let $(A, \{\cdot,\cdot\}, \mu)$  be an ordinary Poisson algebra over a field $\K$ and let
$\alpha,\beta: A\rightarrow A$ be two commuting morphisms. Define the two  linear maps $\{\cdot,\cdot\}_{ \alpha,\beta},\mu_{ \alpha,\beta}:A\otimes A\longrightarrow A$ by
$$\{x,y\}_{ \alpha,\beta}=\{ \alpha(x),\beta(y)\}\ \textrm{and} \ \mu_{ \alpha,\beta}(x,y)=\mu (\alpha(x),\beta(y)),$$ for all $x,y\in A$.\\
Then $A_{ \alpha,\beta}
:=(A, \{\cdot,\cdot\}_{ \alpha,\beta}, \mu_{ \alpha,\beta} ,\alpha,\beta)$ is a BiHom-Poisson algebra.
\end{prop}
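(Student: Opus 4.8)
The plan is to verify, one after another, the three axioms in the definition of a BiHom-Poisson algebra for the quintuple $A_{\alpha,\beta}=(A,\{\cdot,\cdot\}_{\alpha,\beta},\mu_{\alpha,\beta},\alpha,\beta)$, reducing in each case the identity under consideration to the corresponding identity of the underlying Poisson algebra $(A,\{\cdot,\cdot\},\mu)$ evaluated at twisted arguments. Throughout, I would use without further comment that $\alpha$ and $\beta$ are morphisms for both operations, i.e.\ $\alpha\circ\mu=\mu\circ\alpha^{\otimes 2}$, $\beta\circ\mu=\mu\circ\beta^{\otimes 2}$, $\alpha(\{x,y\})=\{\alpha(x),\alpha(y)\}$ and $\beta(\{x,y\})=\{\beta(x),\beta(y)\}$, together with the hypothesis $\alpha\beta=\beta\alpha$.

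First I would dispose of the BiHom-associative and BiHom-Lie parts: $(A,\mu_{\alpha,\beta},\alpha,\beta)$ is a BiHom-associative algebra and $(A,\{\cdot,\cdot\}_{\alpha,\beta},\alpha,\beta)$ is a BiHom-Lie algebra by the BiHom-Yau twisting construction of \cite{GrazianiMakhloufMeniniPanaite}. The extra BiHom-commutativity of $\mu_{\alpha,\beta}$ is immediate, since $\mu_{\alpha,\beta}(\beta(x),\alpha(y))=\mu(\alpha\beta(x),\alpha\beta(y))=\mu(\alpha\beta(y),\alpha\beta(x))=\mu_{\alpha,\beta}(\beta(y),\alpha(x))$, using $\alpha\beta=\beta\alpha$ and commutativity of $\mu$; BiHom-skew-symmetry of $\{\cdot,\cdot\}_{\alpha,\beta}$ is proved in exactly the same way, and the BiHom-Jacobi identity \eqref{Bihom jacobi}, after pushing the morphisms $\alpha,\beta$ through the nested brackets, reduces to the cyclic sum $\circlearrowleft_{x,y,z}\{\alpha\beta^{2}(x),\{\alpha\beta^{2}(y),\alpha\beta^{2}(z)\}\}=0$, which is the ordinary Jacobi identity for $\{\cdot,\cdot\}$.

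It then remains to check the compatibility identity \eqref{a}, which is the main point. Expanding its left-hand side gives $\{\mu_{\alpha,\beta}(x,y),\alpha\beta(z)\}_{\alpha,\beta}=\{\alpha(\mu(\alpha(x),\beta(y))),\beta(\alpha\beta(z))\}=\{\mu(\alpha^{2}(x),\alpha\beta(y)),\alpha\beta^{2}(z)\}$, and applying the ordinary Leibniz rule of $(A,\{\cdot,\cdot\},\mu)$ with the three arguments $\alpha^{2}(x)$, $\alpha\beta(y)$, $\alpha\beta^{2}(z)$ turns this into $\mu(\{\alpha^{2}(x),\alpha\beta^{2}(z)\},\alpha\beta(y))+\mu(\alpha^{2}(x),\{\alpha\beta(y),\alpha\beta^{2}(z)\})$. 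On the other side, expanding $\mu_{\alpha,\beta}(\{x,\beta(z)\}_{\alpha,\beta},\alpha(y))$ and $\mu_{\alpha,\beta}(\alpha(x),\{y,\alpha(z)\}_{\alpha,\beta})$ and moving every occurrence of $\alpha$ and $\beta$ inside the operations (again using that they are morphisms of both structures and that they commute) should produce exactly these two summands, so the two sides agree. I expect the only real obstacle to be the bookkeeping of the twisting maps in this last computation — making sure each $\alpha$ and $\beta$ ends up in the correct slot — but this is entirely forced by $\alpha\beta=\beta\alpha$ and the morphism property, so once it is carried out the proof is complete.
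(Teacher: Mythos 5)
Your proposal is correct and follows essentially the same route as the paper: the BiHom-associative and BiHom-Lie parts are handled by the known twisting constructions, and the compatibility identity is verified by pushing $\alpha,\beta$ inside both operations so that it becomes the ordinary Leibniz rule evaluated at $X=\alpha^{2}(x)$, $Y=\alpha\beta(y)$, $Z=\alpha\beta^{2}(z)$, exactly as in the paper's computation.
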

\begin{proof}
We already have $(A, \mu, \alpha,\beta)$ is a BiHom-commutative BiHom-associative algebra and $(A, \{ , \}_{\alpha,\beta}, \alpha,\beta)$ is a BiHom-Lie algebra. It remains to check the BiHom-Leibniz identity. Let $x,\ y,\ z\in A$, we have
\begin{align*}
 & \{\mu_{ \alpha,\beta}(x,y),\alpha\beta(z)\}_{ \alpha,\beta} - \mu_{ \alpha,\beta}(\{x,\beta(z)\}_{ \alpha,\beta},\alpha(y))-\mu_{ \alpha,\beta}(\alpha(x), \{y,\alpha(z)\}_{ \alpha,\beta})\\
  &= \{\mu(\alpha^2(x),\alpha\beta(y)),\alpha\beta^2(z)\} - \mu(\{\alpha^2(x), \alpha\beta^2(z)\},\alpha\beta(y))-\mu(\alpha^2(x), \{\alpha\beta(y),\alpha\beta^2(z)\}) \\
   & = \{\mu(X,Y),Z)\} - \mu(\{X, Z\},Y)-\mu(X, \{Y,Z\})=0,
\end{align*}
where $X=\alpha^2(x),\ Y=\alpha\beta(y),\ Z=\alpha\beta^2(z).$
\end{proof}
\begin{rem}
  Let $(A, \{\cdot,\cdot\}, \mu,\alpha,\beta)$ be a  BiHom-Poisson algebra and $\alpha', \beta': A \to A$ two  BiHom-Poisson algebra morphisms such that any of the maps $\alpha, \beta, \alpha', \beta'$ commute. Define new multiplications on $A$ by:
 \begin{align*}
    & \{x,y\}'= \{\alpha'(x) , \beta'(y)\}, \quad
    \mu'(x  ,y)=\mu( \alpha'(x) , \beta'(y)).
 \end{align*}
 Then, $(A, \{\cdot,\cdot\}', \mu',\alpha'\alpha,\beta'\beta)$ is a BiHom-Poisson algebra.
\end{rem}
\begin{exa}\label{example1HomPoisson}
Let $\{e_1,e_2,e_3\}$  be a basis of a $3$-dimensional vector space
$A$ over $\K$.  Consider the following multiplication $\mu$, skew-symmetric
bracket and linear map $\alpha$ on $A-
=\K^3${\rm :}
\[
\begin{array}{ll}
 \begin{array}{lll}
 \mu(e_1,e_1) &=&  e_1, \ \\
 \mu(e_1,e_2) &=& \mu(e_2,e_1)=e_3,\\
 \end{array}
 & \quad
 \begin{array}{lll}
 \{ e_1,e_2 \}&=& a e_2+ b e_3, \ \\
 \{e_1, e_3 \}&=& c e_2+ d e_3, \ \\
 \end{array}
\end{array}
\]

\[
\alpha (e_1)= \lambda_1 e_2+\lambda_2 e_3 , \quad
\alpha (e_2) =\lambda_3 e_2+\lambda_4 e_3  , \quad
\alpha (e_3)=\lambda_5 e_2+\lambda_6 e_3,
\]
where $a,b,c,d,\lambda_1,\lambda_2,\lambda_3,\lambda_4,\lambda_5,\lambda_6$ are parameters
in $\K$.
Assume that $\beta=Id$, hence $\alpha\beta=\beta\alpha$.  Using Proposition \ref{twist}, we construct the following  multiplicative BiHom-Poisson algebra defined by
\[
\begin{array}{ll}
 \begin{array}{lll}
 \mu_{\alpha\beta}(e_1,e_1) &=&  \lambda_{1} e_3, \  \\
 \mu_{\alpha\beta}(e_2,e_1) &=& \lambda_{3} e_3,\\
  \mu_{\alpha\beta}(e_3,e_1) &=& \lambda_{5} e_3,\\
 \end{array}
 & \quad
 \begin{array}{lll}
 \{ e_1,e_1 \}_{\alpha\beta}&=& -(\lambda_{1} a+\lambda_{2} c) e_2 -(\lambda_{1} b+\lambda_{2} d) e_3, \ \\
 \{ e_2,e_1 \}_{\alpha\beta}&=& -(\lambda_{3} a+\lambda_{4} c) e_2 -(\lambda_{3} b+\lambda_{4} d) e_3, \ \\
  \{ e_3,e_1 \}_{\alpha\beta}&=& -(\lambda_{5} a+\lambda_{6} c) e_2 -(\lambda_{5} b+\lambda_{6} d) e_3. \ \\
 \end{array}
\end{array}
\]
\end{exa}
Then we give an example of BiHom-Poisson algebra where $\alpha$ and $\beta$ are arbitrary and $\{e_1, e_2, e_3\}$ be a basis of a 3-dimensional vector space $A$ over $\K$.
\begin{exa}\label{example2biHomPoisson}
\[
\alpha (e_1)=  e_2 , \quad
\alpha (e_2) = e_2  , \quad
\alpha (e_3)= e_3.
\]
\[
\beta (e_1)=  e_1 , \quad
\beta (e_2) = e_2 .
\]
\[
\begin{array}{ll}
 \begin{array}{lll}
 \mu(e_1,e_2) &=& \lambda_{1} e_2, \ \\
 \mu(e_2,e_1) &=& \lambda_{1} e_1,\\
 \end{array}
 & \quad
 \begin{array}{lll}
 \{ e_1,e_2 \}= a e_3, \ \\
 \end{array}
\end{array}
\]
where $a,\lambda_1$ are parameters
in $\K$.
\end{exa}
  Another example of BiHom-Poisson algebras of dimension 3 with basis $\{e_1, e_2, e_3\}$ is given where $\alpha$ and $\beta$ are diagonal.
 \begin{exa}\label{example3biHomPoisson}
\[
\alpha (e_1)= a e_1 , \quad
\alpha (e_2) = b e_2 .
\]
\[
\beta (e_1)= c e_1 , \quad
\beta (e_2) = d e_2 .
\]
\[
\begin{array}{ll}
 \begin{array}{lll}
 \mu(e_3,e_3) = \lambda_{1} e_3, \ \\

 \end{array}
 & \quad
 \begin{array}{lll}
 \{ e_3,e_3 \}= \lambda_{2} e_3, \ \\
 \end{array}
\end{array}
\]
where $a,b,c,d,\lambda_1,\lambda_2$ are parameters in $\K$.
\end{exa}

In the sequel we define a direct sum and tensor product of a BiHom-Poisson algebra and a  BiHom-associative symmetric algebra.

\begin{thm}
Let $(A_{1},\mu_{1},\{.,.\}_{1},\alpha_{1},\beta_{1})$ and $(A_{2},\mu_{2},\{.,.\}_{2},\alpha_{2},\beta_{2})$ be two (non-BiHom-commutative) BiHom-Poisson algebras.  Let $\mu_{A_{1}\oplus A_{2}}$ be a bilinear map  on $A_{1}\oplus A_{2}$ defined for $x_1,y_1\in A_1$ and $x_2,y_2\in A_2$ by
$$\mu(x_{1}+x_{2},y_{1}+y_{2})=\mu_{1}(x_{1},y_{1})+\mu_{2}(x_{2},y_{2}),$$   $\{.,.\}_{A_{1}\oplus A_{2}}$  a bilinear map  defined by $$\{x_{1}+x_{2},y_{1}+y_{2}\}_{A_{1}\oplus A_{2}}=\{x_1,y_1\}_1+\{x_2,y_2\}_2$$ and  $\alpha_{A_{1}\oplus A_{2}}$ a linear map defined by $$\alpha_{A_{1}\oplus A_{2}}(x_1+x_2)=\alpha_1(x_1)+\alpha_2(x_2),$$
and  $\beta_{A_{1}\oplus A_{2}}$ a linear map defined by $$\beta_{A_{1}\oplus A_{2}}(x_1+x_2)=\beta_1(x_1)+\beta_2(x_2).$$  Then
$$(A_{1}\oplus A_{2},\mu_{A_{1}\oplus A_{2}},\{.,.\}_{A_{1}\oplus A_{2}},\alpha_{A_{1}\oplus A_{2}},\beta_{A_{1}\oplus A_{2}})$$
is a BiHom-Poisson algebra.
\end{thm}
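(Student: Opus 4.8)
The plan is to verify, one by one, the three defining conditions of a (non-BiHom-commutative) BiHom-Poisson algebra for the quintuple
\[
(A_{1}\oplus A_{2},\ \mu_{A_{1}\oplus A_{2}},\ \{\cdot,\cdot\}_{A_{1}\oplus A_{2}},\ \alpha_{A_{1}\oplus A_{2}},\ \beta_{A_{1}\oplus A_{2}}),
\]
exploiting that the direct-sum decomposition is respected by all four structure maps. For brevity write $\mu=\mu_{A_{1}\oplus A_{2}}$, $\{\cdot,\cdot\}=\{\cdot,\cdot\}_{A_{1}\oplus A_{2}}$, $\alpha=\alpha_{A_{1}\oplus A_{2}}$, $\beta=\beta_{A_{1}\oplus A_{2}}$. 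Since $A_{1}\oplus A_{2}$ is a direct sum, every element decomposes uniquely as $x_{1}+x_{2}$ with $x_{i}\in A_{i}$, so the stated formulas define $\mu$ and $\{\cdot,\cdot\}$ unambiguously; moreover $\mu$ and $\{\cdot,\cdot\}$ vanish on mixed pairs (one argument in $A_{1}$, the other in $A_{2}$), while $\alpha(A_{i})\subseteq A_{i}$ and $\beta(A_{i})\subseteq A_{i}$. The observation that makes everything go through is that any multilinear expression built from $\mu$, $\{\cdot,\cdot\}$, $\alpha$, $\beta$ applied to elements of $A_{1}\oplus A_{2}$ equals the sum of the same expression evaluated on the $A_{1}$-components and the one evaluated on the $A_{2}$-components, because every intermediate mixed term produced along the way is zero.

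First I would check that $(A_{1}\oplus A_{2},\{\cdot,\cdot\},\alpha,\beta)$ is a BiHom-Lie algebra. The commutativity $\alpha\beta=\beta\alpha$ is immediate from $\alpha_{i}\beta_{i}=\beta_{i}\alpha_{i}$, $i=1,2$. That $\alpha$ and $\beta$ are endomorphisms for $\{\cdot,\cdot\}$, the BiHom-skew-symmetry $\{\beta(x),\alpha(y)\}=-\{\beta(y),\alpha(x)\}$, and the BiHom-Jacobi identity $\circlearrowleft_{x,y,z}\{\beta^{2}(x),\{\beta(y),\alpha(z)\}\}=0$ then each reduce, by the splitting observation, to the corresponding identity in $A_{1}$ together with the corresponding identity in $A_{2}$, both of which hold by hypothesis. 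Next I would check in the same way that $(A_{1}\oplus A_{2},\mu,\alpha,\beta)$ is a BiHom-associative algebra: multiplicativity of $\alpha$ and $\beta$ with respect to $\mu$ and the BiHom-associative condition $\mu(\alpha(x),\mu(y,z))=\mu(\mu(x,y),\beta(z))$ again reduce componentwise (if one also wants the BiHom-commutative version the same reduction applies, but the statement as phrased is the non-BiHom-commutative one, so nothing extra is needed).

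Finally I would verify the compatibility identity
\[
\{\mu(x,y),\alpha\beta(z)\}=\mu(\{x,\beta(z)\},\alpha(y))+\mu(\alpha(x),\{y,\alpha(z)\}),\qquad x,y,z\in A_{1}\oplus A_{2}.
\]
Writing $x=x_{1}+x_{2}$, $y=y_{1}+y_{2}$, $z=z_{1}+z_{2}$ and expanding each side via the splitting observation, both sides become $\sum_{i=1,2}$ of the corresponding expression in $A_{i}$; since each $(A_{i},\mu_{i},\{\cdot,\cdot\}_{i},\alpha_{i},\beta_{i})$ satisfies the compatibility identity, the two sides coincide, and the three verifications together prove the claim. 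There is no genuine obstacle here, since the two summands never interact; the only point requiring care is the bookkeeping — making sure that whenever $\mu$ or $\{\cdot,\cdot\}$ is applied to an already-decomposed argument, such as $\{x_{1}+x_{2},\beta_{1}(z_{1})+\beta_{2}(z_{2})\}$, the mixed terms are dropped so that one is genuinely left with $\{x_{1},\beta_{1}(z_{1})\}_{1}+\{x_{2},\beta_{2}(z_{2})\}_{2}$, and that the multiplicativity of $\alpha,\beta$ is used to pull those maps through $\mu$ and $\{\cdot,\cdot\}$ where needed.
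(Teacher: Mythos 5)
Your proposal is correct: the componentwise verification, together with the observation that all mixed terms vanish because $\mu$, $\{\cdot,\cdot\}$, $\alpha$, $\beta$ all respect the decomposition $A_{1}\oplus A_{2}$, is exactly what is needed, and your remark about preserving (or not) BiHom-commutativity handles the parenthetical in the statement properly. Note that the paper states this theorem without proof, evidently regarding it as routine; your argument supplies precisely the standard verification the authors leave to the reader, so there is nothing to compare beyond saying it is the expected approach and it works.
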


\begin{thm}
Let $(A,\mu,\{.,.\},\alpha,\beta)$ be a  BiHom-Poisson algebra and  $(B,\mu',\alpha',\beta')$ be a  BiHom-associative symmetric algebra, then
$$(A\otimes B,\{.,.\}_{A\otimes B}, \mu \otimes \mu',\alpha\otimes\alpha',\beta\otimes\beta'),$$
is a BiHom-Poisson algebra, where $\{.,.\}_{A\otimes B}=\{.,.\}\otimes\mu'. $
\end{thm}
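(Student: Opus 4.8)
The plan is to verify the three defining axioms of a BiHom-Poisson algebra for the $5$-uple $(A\otimes B,\{\cdot,\cdot\}_{A\otimes B},\mu\otimes\mu',\alpha\otimes\alpha',\beta\otimes\beta')$. Write $\widetilde{\mu}=\mu\otimes\mu'$, $\widetilde{\alpha}=\alpha\otimes\alpha'$, $\widetilde{\beta}=\beta\otimes\beta'$, so that $\widetilde{\mu}(a\otimes b,a'\otimes b')=\mu(a,a')\otimes\mu'(b,b')$ and $\{a\otimes b,a'\otimes b'\}_{A\otimes B}=\{a,a'\}\otimes\mu'(b,b')$ on homogeneous elements. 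First I would dispose of the easy structural conditions: $\widetilde{\alpha}\widetilde{\beta}=\widetilde{\beta}\widetilde{\alpha}$ and the multiplicativity of $\widetilde{\alpha},\widetilde{\beta}$ with respect to both $\widetilde{\mu}$ and $\{\cdot,\cdot\}_{A\otimes B}$ follow slot-wise from the corresponding properties of $(\alpha,\beta)$ on $A$ and $(\alpha',\beta')$ on $B$. Next, $(A\otimes B,\widetilde{\mu},\widetilde{\alpha},\widetilde{\beta})$ is a BiHom-commutative BiHom-associative algebra: BiHom-associativity of a tensor product of two BiHom-associative algebras is a routine factor-wise computation, and BiHom-commutativity holds because both $(A,\mu,\alpha,\beta)$ and $(B,\mu',\alpha',\beta')$ are BiHom-commutative --- this is the point at which the ``symmetric'' hypothesis on $B$ is used.

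The substance is the BiHom-Lie axiom and the BiHom-Leibniz identity. For BiHom-skew-symmetry I would expand $\{\widetilde{\beta}(a\otimes b),\widetilde{\alpha}(a'\otimes b')\}_{A\otimes B}=\{\beta(a),\alpha(a')\}\otimes\mu'(\beta'(b),\alpha'(b'))$ and apply BiHom-skew-symmetry of $A$ to the first tensor factor together with BiHom-commutativity of $B$ to the second, obtaining $-\{\widetilde{\beta}(a'\otimes b'),\widetilde{\alpha}(a\otimes b)\}_{A\otimes B}$. For BiHom-Jacobi I would expand, for homogeneous $u_i=a_i\otimes b_i$, the cyclic sum of $\{\widetilde{\beta}^{2}(u_1),\{\widetilde{\beta}(u_2),\widetilde{\alpha}(u_3)\}_{A\otimes B}\}_{A\otimes B}$; each term equals $\{\beta^{2}(a_1),\{\beta(a_2),\alpha(a_3)\}\}\otimes\mu'(\beta'^{2}(b_1),\mu'(\beta'(b_2),\alpha'(b_3)))$. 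The $A$-factors sum to zero by the BiHom-Jacobi identity in $A$, so it is enough that the $B$-factor $\mu'(\beta'^{2}(b_1),\mu'(\beta'(b_2),\alpha'(b_3)))$ be invariant under cyclic permutation of $(b_1,b_2,b_3)$; this is a lemma on BiHom-commutative BiHom-associative algebras, proved by using BiHom-commutativity to reorder factors inside products, BiHom-associativity to re-bracket, and multiplicativity of $\alpha',\beta'$ (the computation is cleanest when $\alpha',\beta'$ are bijective, where one rewrites everything with the inverse maps).

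Finally, for the BiHom-Leibniz identity I would evaluate both sides at $u_i=a_i\otimes b_i$: the left side is $\{\mu(a_1,a_2),\alpha\beta(a_3)\}\otimes\mu'(\mu'(b_1,b_2),\alpha'\beta'(b_3))$, and the two right-hand terms are $\mu(\{a_1,\beta(a_3)\},\alpha(a_2))\otimes\mu'(\mu'(b_1,\beta'(b_3)),\alpha'(b_2))$ and $\mu(\alpha(a_1),\{a_2,\alpha(a_3)\})\otimes\mu'(\alpha'(b_1),\mu'(b_2,\alpha'(b_3)))$. Since all three $B$-factors coincide --- the identity $\mu'(\mu'(b_1,b_2),\alpha'\beta'(b_3))=\mu'(\mu'(b_1,\beta'(b_3)),\alpha'(b_2))=\mu'(\alpha'(b_1),\mu'(b_2,\alpha'(b_3)))$, again a consequence of BiHom-associativity, BiHom-commutativity and multiplicativity in $B$ --- the whole identity reduces to the BiHom-Leibniz identity in $A$ tensored with this common $B$-factor, and thus holds. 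The main obstacle, and the only step that is not pure bookkeeping, is establishing these ``parenthesization-and-order-free'' identities for the $B$-side products; once they are in hand, every other line simply pushes the corresponding identity of $A$ through the $A$-tensor factor while carrying the matching $B$-factor along.
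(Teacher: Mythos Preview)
Your proposal is correct and follows essentially the same route as the paper: verify BiHom-associativity, BiHom-commutativity, BiHom-skew-symmetry, BiHom-Jacobi, and BiHom-Leibniz factor-wise, reducing each to the corresponding identity in $A$ tensored with a product in $B$, and then arguing that the relevant $B$-side products coincide. The paper carries out the Leibniz step exactly as you describe (its $b'=d'=f'$ is your ``all three $B$-factors coincide'') and is in fact less explicit than you are about the cyclic-invariance lemma needed for BiHom-Jacobi, which it attributes simply to BiHom-associativity of $\mu'$.
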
\begin{proof}
Since $\mu$ and $\mu'$ are both BiHom-associative multiplication whence a tensor product $\mu \otimes \mu'$ is BiHom-associative. Also the BiHom-commutativity of
$\mu \otimes \mu'$, the BiHom-skewsymmetry of $\{.,.\}$ and the BiHom-commutativity of $\mu$ imply the BiHom-skewsymmetry of $\{.,.\}_{A\otimes B}$. Same, since the BiHom-Jacobi identity of $\{.,.\}$ and the BiHom-associative of $\mu'$ are satisfy then $\{.,.\}_{A\otimes B}$ is a BiHom-Lie bracket on $A\otimes B$. Therefore, it  remains
to check  the BiHom-Leibniz identity.
 We have
\begin{align*}
LHS=&\{\mu \otimes\mu'(a_{1}\otimes b_{1},a_{2}\otimes b_{2}),\alpha\beta\otimes\alpha'\beta'(a_{3}\otimes b_{3})\}_{A\otimes B}\\
&=\{\mu(a_{1},b_{1})\otimes \mu'(a_{2},b_{2}),\alpha\beta(a_{3})\otimes\alpha'\beta'(b_{3})\}_{A\otimes B}\\
&=\underbrace{\{\mu(a_{1},b_{1}),\alpha\beta(a_{3})\}_{A}}_{a'}\otimes \underbrace{\mu'(\mu'(a_{2},b_{2}),\alpha'\beta'(b_{3}))}_{b'}\\
\end{align*}
and
\begin{align*}
RHS=&\mu \otimes\mu'(a_{1}\otimes b_{1},\{\beta(a_{2})\otimes \beta'(b_{2}),\alpha(a_{3})\otimes\alpha'( b_{3})\}_{{A\otimes B}})\\
&+\mu \otimes\mu'(\{\alpha(a_{1})\otimes\alpha'( b_{1}),a_{3}\otimes b_{3}\}_{A\otimes B},\alpha \otimes \beta'(a_{2}\otimes b_{2}))\\
&=\mu \otimes\mu'(\alpha (a_{1})\otimes \alpha'(b_{1}),\{a_{2},\alpha(a_{3})\}\otimes \mu'(b_{2},\alpha'(b_{3})))\\
&+\mu \otimes\mu'(\{a_{1},\beta(a_{3})\}\otimes \mu'(b_{1},\beta'(b_{3})),\alpha (a_{2})\otimes \alpha'(b_{2}))\\
&=\underbrace{\mu(\alpha (a_{1}),\{a_{2},\alpha(a_{3})\})}_{c'}\otimes \underbrace{\mu'(\alpha'(b_{1}),\mu'(b_{2},\alpha'(b_{3})))}_{d'}\\
&+\underbrace{\mu(\{a_{1},\beta(a_{3})\},\alpha (a_{2}))}_{e'}\otimes \underbrace{\mu'(\mu'(b_{1},\beta'(b_{3})),\alpha'(b_{2})}_{f'}.
\end{align*}

With BiHom-Leibniz identity we have $a'=c'+e'$, and using the BiHom-associativity  condition
we have $b'=d'=f'$. Therefore the left hand side is equal to the right hand side and the BiHom-Leibniz identity  is proved. Then
\begin{center}
$(A\otimes B, \mu \otimes \mu',\{.,.\}_{A\otimes B},(\alpha\otimes\alpha',\beta \otimes\beta'))$
\end{center}

is a BiHom-Poisson algebra.

\end{proof}

\section{Modules and semi-direct product  of BiHom-Poisson algebras}
In this section we introduce a representation theory of BiHom-Poisson algebras and provide a  semi-direct product construction.

\begin{df} A representation of a BiHom-Lie algebra $(A, \{\cdot,\cdot\}, \alpha,\beta)$ on a vector space $V$ with respect to  two commuting maps $\gamma,\nu\in End(V)$ is a linear map $\rho_{\{\cdot,\cdot\}}:A\longrightarrow End(V)$, such that for any $x, y\in A$, the following equalities are satisfied:
\begin{eqnarray}
&&\rho_{\{\cdot,\cdot\}}(\alpha(x))\circ \gamma=\gamma\circ\rho_{\{\cdot,\cdot\}}(x),\\ &&\ \rho_{\{\cdot,\cdot\}}(\beta(x))\circ \nu=\nu\circ\rho_{\{\cdot,\cdot\}}(x),\\
&&\rho_{\{\cdot,\cdot\}}(\{\beta(x), y\})\circ \nu=(\rho_{\{\cdot,\cdot\}}(\alpha\beta(x))\circ\rho_{\{\cdot,\cdot\}}(y)
-\rho_{\{\cdot,\cdot\}}(\beta(y))\circ\rho_{\{\cdot,\cdot\}}(\alpha(x)).
\end{eqnarray}
\end{df}
\begin{prop}
  Let $(A,\{\cdot,\cdot\})$ be a Lie algebra and $\rho:A\rightarrow End(V)$ be a representation of the Lie algebra on $V$. Let  $\alpha,\beta:A\rightarrow A$ be  two commuting morphisms  and let $\gamma,\nu:V\rightarrow V$ be two commuting linear maps such that  $\rho_{\{\cdot,\cdot\}}(\alpha(x))\circ \gamma=\gamma\circ\rho_{\{\cdot,\cdot\}}(x)$, $ \rho_{\{\cdot,\cdot\}}(\beta(x))\circ \nu=\nu\circ\rho_{\{\cdot,\cdot\}}(x)$ and $ \rho_{\{\cdot,\cdot\}}(\alpha(x))\circ \nu=-\rho_{\{\cdot,\cdot\}}(\beta(x))\circ \gamma$. Define $\widetilde{\rho}_{\{\cdot,\cdot\}}=\rho_{\{\cdot,\cdot\}}(\alpha(x))\circ\gamma.$   Then $(V,\widetilde{\rho},\gamma,\nu)$ is a representation of the BiHom-Lie algebra $A$.
\end{prop}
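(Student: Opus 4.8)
The plan is to check directly that $(V,\widetilde{\rho},\gamma,\nu)$ satisfies the three defining identities of the preceding definition of a representation of a BiHom-Lie algebra, the relevant BiHom-Lie algebra being the one attached to the Lie algebra $(A,\{\cdot,\cdot\})$ and the commuting morphisms $\alpha,\beta$ by the twisting construction, namely $(A,\{\cdot,\cdot\}_{\alpha,\beta},\alpha,\beta)$ with $\{x,y\}_{\alpha,\beta}=\{\alpha(x),\beta(y)\}$ (this is the Lie part of Proposition \ref{twist}). Before starting I would record the bookkeeping rules furnished by the hypotheses: from $\rho(\alpha(x))\circ\gamma=\gamma\circ\rho(x)$, $\rho(\beta(x))\circ\nu=\nu\circ\rho(x)$ and $\rho(\alpha(x))\circ\nu=-\rho(\beta(x))\circ\gamma$, together with $\gamma\circ\nu=\nu\circ\gamma$ and $\alpha\circ\beta=\beta\circ\alpha$, one obtains a rewriting system that moves a trailing $\gamma$ or $\nu$ to the left of a factor $\rho(\cdot)$ at the cost of applying $\alpha$ or $\beta$ to its argument (and, in the mixed case, introducing a sign and interchanging $\gamma$ and $\nu$); these rules may be iterated by replacing $x$ with $\alpha(x),\beta(x),\dots$. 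I would also note once and for all that $\widetilde{\rho}(\alpha(x))=\rho(\alpha^{2}(x))\circ\gamma$ and $\widetilde{\rho}(\beta(x))=\rho(\alpha\beta(x))\circ\gamma$.

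I would first dispatch the two compatibility identities. For $\widetilde{\rho}(\alpha(x))\circ\gamma=\gamma\circ\widetilde{\rho}(x)$, write the left-hand side as $\rho(\alpha^{2}(x))\circ\gamma\circ\gamma$ and apply $\rho(\alpha(\alpha(x)))\circ\gamma=\gamma\circ\rho(\alpha(x))$ to turn it into $\gamma\circ\rho(\alpha(x))\circ\gamma=\gamma\circ\widetilde{\rho}(x)$. For $\widetilde{\rho}(\beta(x))\circ\nu=\nu\circ\widetilde{\rho}(x)$, write the left-hand side as $\rho(\alpha\beta(x))\circ\gamma\circ\nu$, commute $\gamma$ and $\nu$, and use $\rho(\beta(\alpha(x)))\circ\nu=\nu\circ\rho(\alpha(x))$ — this is where $\alpha\circ\beta=\beta\circ\alpha$ enters, to recognise $\alpha\beta(x)$ as $\beta(\alpha(x))$ — obtaining $\nu\circ\rho(\alpha(x))\circ\gamma=\nu\circ\widetilde{\rho}(x)$. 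The BiHom-skew-symmetry and BiHom-Jacobi content of a representation are all concentrated in the last identity, so no separate verification is needed for them.

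The core of the proof is the BiHom-Leibniz identity for a representation,
\[
\widetilde{\rho}(\{\beta(x),y\}_{\alpha,\beta})\circ\nu=\widetilde{\rho}(\alpha\beta(x))\circ\widetilde{\rho}(y)-\widetilde{\rho}(\beta(y))\circ\widetilde{\rho}(\alpha(x)).
\]
On the left I would use $\{\beta(x),y\}_{\alpha,\beta}=\{\alpha\beta(x),\beta(y)\}$ together with the fact that $\alpha$ is a Lie morphism to get $\widetilde{\rho}(\{\alpha\beta(x),\beta(y)\})=\rho(\{\alpha^{2}\beta(x),\alpha\beta(y)\})\circ\gamma$, expand it via $\rho(\{u,v\})=\rho(u)\rho(v)-\rho(v)\rho(u)$, compose with $\nu$, and then push the $\gamma$ and $\nu$ past the $\rho$-factors by the rewriting rules, crucially invoking $\rho(\alpha(\cdot))\circ\nu=-\rho(\beta(\cdot))\circ\gamma$ to absorb the trailing $\nu$ and convert it into a $\gamma$; this brings the left-hand side to a normal form consisting of a commutator-type expression of $\rho$-factors followed by $\gamma\circ\gamma$. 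On the right I would expand each $\widetilde{\rho}$, obtaining the compositions $\rho(\alpha^{2}\beta(x))\circ\gamma\circ\rho(\alpha(y))\circ\gamma$ and $\rho(\alpha\beta(y))\circ\gamma\circ\rho(\alpha^{2}(x))\circ\gamma$, and use the same equivariance relations to slide every $\gamma$ to the far right; the two computations are arranged so as to land on the same expression, and the comparison concludes the proof. The step I expect to be the main obstacle is exactly this last one: one must keep careful track of which of $\gamma,\nu$ occupies which slot and identify precisely where the mixed relation $\rho(\alpha(\cdot))\circ\nu=-\rho(\beta(\cdot))\circ\gamma$ has to be used, so that the $\nu$ sitting on the left-hand side and the extra $\gamma$ forced by the definition $\widetilde{\rho}(x)=\rho(\alpha(x))\circ\gamma$ on the right-hand side are reconciled; everything else is a routine application of the equivariance relations and of the Lie-representation identity.
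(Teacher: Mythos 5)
Your overall plan is the paper's plan: identify the BiHom-Lie structure as the twist $\{x,y\}_{\alpha,\beta}=\{\alpha(x),\beta(y)\}$, check the two compatibility identities $\widetilde{\rho}(\alpha(x))\circ\gamma=\gamma\circ\widetilde{\rho}(x)$ and $\widetilde{\rho}(\beta(x))\circ\nu=\nu\circ\widetilde{\rho}(x)$ (your verifications of these are correct), and then verify the third axiom by expanding and pushing the twisting maps through the $\rho$-factors. The gap sits exactly at the step you defer to ``careful bookkeeping'': with the twisted map taken literally as $\widetilde{\rho}(x)=\rho(\alpha(x))\circ\gamma$, the two sides of the third axiom do \emph{not} normalize to the same expression. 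Carrying out your own recipe, the left-hand side $\widetilde{\rho}(\{\alpha\beta(x),\beta(y)\})\circ\nu$ becomes, after expanding by the Lie-representation identity and absorbing the trailing $\nu$ via $\rho(\alpha(\cdot))\circ\nu=-\rho(\beta(\cdot))\circ\gamma$, the expression $\bigl(-\rho(\alpha^{2}\beta(x))\rho(\beta^{2}(y))+\rho(\alpha\beta(y))\rho(\alpha\beta^{2}(x))\bigr)\circ\gamma^{2}$, whereas the right-hand side $\rho(\alpha^{2}\beta(x))\gamma\rho(\alpha(y))\gamma-\rho(\alpha\beta(y))\gamma\rho(\alpha^{2}(x))\gamma$ normalizes, by sliding $\gamma$ to the right with $\rho(\alpha(\cdot))\circ\gamma=\gamma\circ\rho(\cdot)$, to $\bigl(\rho(\alpha^{2}\beta(x))\rho(\alpha^{2}(y))-\rho(\alpha\beta(y))\rho(\alpha^{3}(x))\bigr)\circ\gamma^{2}$; the hypotheses give no way to identify these. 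Indeed no bookkeeping can succeed: take $\alpha=\beta=\mathrm{id}_{A}$, $\gamma=\mathrm{id}_{V}$, $\nu=-\mathrm{id}_{V}$, which satisfies every assumption (including $\rho(\alpha(x))\circ\nu=-\rho(\beta(x))\circ\gamma$); then $\widetilde{\rho}=\rho$ and the third axiom would read $-\rho(\{x,y\})=\rho(x)\rho(y)-\rho(y)\rho(x)=\rho(\{x,y\})$, which fails for any representation with some $\rho(\{x,y\})\neq 0$.

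What the paper's displayed computation actually proves is the statement with the other twisting map, $\widetilde{\rho}(x)=\rho(\alpha(x))\circ\nu$: note that every term in its chain of equalities ends in $\nu^{2}$, i.e.\ the trailing map attached to $\widetilde{\rho}$ is treated throughout as $\nu$, not $\gamma$. With that definition the verification is immediate, since $\widetilde{\rho}(\{\alpha\beta(x),\beta(y)\})\circ\nu=\rho(\{\alpha^{2}\beta(x),\alpha\beta(y)\})\circ\nu^{2}$, while $\widetilde{\rho}(\alpha\beta(x))\circ\widetilde{\rho}(y)=\rho(\alpha^{2}\beta(x))\rho(\alpha\beta(y))\circ\nu^{2}$ and $\widetilde{\rho}(\beta(y))\circ\widetilde{\rho}(\alpha(x))=\rho(\alpha\beta(y))\rho(\alpha^{2}\beta(x))\circ\nu^{2}$ using only $\rho(\beta(\cdot))\circ\nu=\nu\circ\rho(\cdot)$, so everything collapses against $\nu^{2}$ by the Lie-representation identity (the mixed hypothesis is not even needed for this axiom). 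So the obstacle you flagged is a genuine obstruction to the claim as you set it up, not routine sign-tracking: your proof cannot be completed for $\widetilde{\rho}=\rho(\alpha(\cdot))\circ\gamma$, and it goes through only after replacing $\gamma$ by $\nu$ in the definition of $\widetilde{\rho}$ (or adding hypotheses that force the two normal forms above to coincide).
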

\begin{proof}
Let $x,y \in A$,
\small{\begin{eqnarray*}
&&\widetilde{\rho}_{\{\cdot,\cdot\}}(\{\beta(x), y\}_{\alpha,\beta})\circ \nu-\widetilde{\rho}_{\{\cdot,\cdot\}}(\alpha\beta(x))\circ\widetilde{\rho}_{\{\cdot,\cdot\}}(y)
\widetilde{\rho}_{\{\cdot,\cdot\}}(\beta(y))\circ\widetilde{\rho}_{\{\cdot,\cdot\}}(\alpha(x))\\&&
=\widetilde{\rho}_{\{\cdot,\cdot\}}({\alpha\beta(x), \beta(y)})\circ \nu-\widetilde{\rho}_{\{\cdot,\cdot\}}(\alpha\beta(x))\circ\rho_{\{\cdot,\cdot\}}(\alpha(y))\circ \nu
+\widetilde{\rho}_{\{\cdot,\cdot\}}(\beta(y))\circ\rho_{\{\cdot,\cdot\}}(\alpha^2(x))\circ \nu=\\&&\rho_{\{\cdot,\cdot\}}({\alpha^2\beta(x), \alpha\beta(y)})\circ \nu^2-\rho_{\{\cdot,\cdot\}}(\alpha^2\beta(x))\circ\rho_{\{\cdot,\cdot\}}(\alpha\beta(y))\circ \nu^2
+\rho_{\{\cdot,\cdot\}}(\alpha\beta(y))\circ\rho_{\{\cdot,\cdot\}}(\alpha^2\beta(x))\circ \nu^2\\&&=
\big(\rho_{\{\cdot,\cdot\}}({\alpha^2\beta(x), \alpha\beta(y)})-\rho_{\{\cdot,\cdot\}}(\alpha^2\beta(x))\circ\rho_{\{\cdot,\cdot\}}(\alpha\beta(y))
+\rho_{\{\cdot,\cdot\}}(\alpha\beta(y))\circ\rho_{\{\cdot,\cdot\}}(\alpha^2\beta(x))\big)\circ \nu^2=0.
\end{eqnarray*}}
\end{proof}
Let $(A,\{\cdot,\cdot\},\alpha,\beta)$ be a BiHom-Lie algebra. Let $\rho_{\{\cdot,\cdot\}}:A\rightarrow End(V)$ be a representation of the BiHom-Lie algebra on $V$ with respect to $\gamma$ and $\nu$.  Assume that the maps $\alpha$ and $\nu$ are bijective. On the direct sum of the underlying vector spaces $A\oplus V$, define $\widetilde{\alpha},\widetilde{\beta}:A\oplus V\longrightarrow A\oplus V$ by
\begin{eqnarray*}
% \nonumber to remove numbering (before each equation)
 \widetilde{\alpha}(x+a) &=& \alpha(x)+\gamma(a),\\
 \widetilde{\beta}(x+a) &=& \beta(x)+\nu(a),
\end{eqnarray*}
and define a skewsymmetric bilinear map $[\cdot,\cdot]_{A\oplus V}:A\oplus V \times A\oplus V\longrightarrow A\oplus V$ by
\begin{eqnarray}
 [(x+a),(y+b)]_{A\oplus V} &=& \{x,y\}+\rho_{\{\cdot,\cdot\}}(x)(b)-\rho_{\{\cdot,\cdot\}}(\alpha^{-1}\beta(y))(\gamma\nu^{-1}(a)).
\end{eqnarray}
\begin{thm}\label{LieProduitSDirect}\cite{RepBiHomLie} With the above notations, $(A\oplus V,[\cdot,\cdot]_{A\oplus V},\widetilde{\alpha},\widetilde{\beta})$ is a BiHom-Lie algebra.
\end{thm}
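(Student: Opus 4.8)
The plan is to verify the four defining conditions of a BiHom-Lie algebra for the quintuple $(A\oplus V,[\cdot,\cdot]_{A\oplus V},\widetilde{\alpha},\widetilde{\beta})$, treating them in order of difficulty. The commutation $\widetilde{\alpha}\circ\widetilde{\beta}=\widetilde{\beta}\circ\widetilde{\alpha}$ is immediate: evaluating either side on $x+a$ gives $\alpha\beta(x)+\gamma\nu(a)$, using $\alpha\beta=\beta\alpha$ in $A$ and the commutation $\gamma\nu=\nu\gamma$ on $V$.

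Next I would check that $\widetilde{\alpha}$ and $\widetilde{\beta}$ are morphisms for $[\cdot,\cdot]_{A\oplus V}$. Expanding $\widetilde{\alpha}\big([(x+a),(y+b)]_{A\oplus V}\big)$ and $[\widetilde{\alpha}(x+a),\widetilde{\alpha}(y+b)]_{A\oplus V}$, the $A$-components coincide because $\alpha$ is a morphism of $(A,\{\cdot,\cdot\})$, while the $V$-components coincide thanks to the equivariance relations $\rho_{\{\cdot,\cdot\}}(\alpha(x))\circ\gamma=\gamma\circ\rho_{\{\cdot,\cdot\}}(x)$ and $\rho_{\{\cdot,\cdot\}}(\beta(x))\circ\nu=\nu\circ\rho_{\{\cdot,\cdot\}}(x)$, together with the bookkeeping identities $\alpha^{-1}\beta\alpha=\beta$ and $\gamma\nu^{-1}\gamma=\gamma^{2}\nu^{-1}$, which follow from the commutation hypotheses; the computation for $\widetilde{\beta}$ is entirely symmetric. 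For BiHom-skew-symmetry, one simplifies
$$[\widetilde{\beta}(x+a),\widetilde{\alpha}(y+b)]_{A\oplus V}=\{\beta(x),\alpha(y)\}+\rho_{\{\cdot,\cdot\}}(\beta(x))(\gamma(b))-\rho_{\{\cdot,\cdot\}}(\beta(y))(\gamma(a))$$
using $\alpha^{-1}\beta\alpha(y)=\beta(y)$ and $\gamma\nu^{-1}\nu(a)=\gamma(a)$; interchanging $(x+a)$ and $(y+b)$ and invoking $\{\beta(x),\alpha(y)\}=-\{\beta(y),\alpha(x)\}$ in $A$ gives $[\widetilde{\beta}(x+a),\widetilde{\alpha}(y+b)]_{A\oplus V}=-[\widetilde{\beta}(y+b),\widetilde{\alpha}(x+a)]_{A\oplus V}$.

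The main obstacle is the BiHom-Jacobi identity $\circlearrowleft_{u,v,w}[\widetilde{\beta}^{2}(u),[\widetilde{\beta}(v),\widetilde{\alpha}(w)]_{A\oplus V}]_{A\oplus V}=0$. Writing $u=x+a$, $v=y+b$, $w=z+c$, I would first compute $[\widetilde{\beta}(v),\widetilde{\alpha}(w)]_{A\oplus V}=\{\beta(y),\alpha(z)\}+\big(\rho_{\{\cdot,\cdot\}}(\beta(y))(\gamma(c))-\rho_{\{\cdot,\cdot\}}(\beta(z))(\gamma(b))\big)$ and then bracket it on the left with $\widetilde{\beta}^{2}(u)=\beta^{2}(x)+\nu^{2}(a)$. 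The $A$-part of the result is $\{\beta^{2}(x),\{\beta(y),\alpha(z)\}\}$, whose cyclic sum vanishes by the BiHom-Jacobi identity of $A$. The $V$-part is the sum of $\rho_{\{\cdot,\cdot\}}(\beta^{2}(x))\rho_{\{\cdot,\cdot\}}(\beta(y))(\gamma(c))$, of $-\rho_{\{\cdot,\cdot\}}(\beta^{2}(x))\rho_{\{\cdot,\cdot\}}(\beta(z))(\gamma(b))$, and of $-\rho_{\{\cdot,\cdot\}}\big(\{\alpha^{-1}\beta^{2}(y),\beta(z)\}\big)(\gamma\nu(a))$, where I have used that $\alpha^{-1}\beta$ is a morphism to rewrite $\alpha^{-1}\beta(\{\beta(y),\alpha(z)\})=\{\alpha^{-1}\beta^{2}(y),\beta(z)\}$.

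To the last of these three terms I would apply the defining identity of the representation, $\rho_{\{\cdot,\cdot\}}(\{\beta(x'),y'\})\circ\nu=\rho_{\{\cdot,\cdot\}}(\alpha\beta(x'))\circ\rho_{\{\cdot,\cdot\}}(y')-\rho_{\{\cdot,\cdot\}}(\beta(y'))\circ\rho_{\{\cdot,\cdot\}}(\alpha(x'))$, with $x'=\alpha^{-1}\beta(y)$, $y'=\beta(z)$, evaluated on $\gamma(a)$ (and noting $\gamma\nu(a)=\nu(\gamma(a))$): it becomes $\rho_{\{\cdot,\cdot\}}(\beta^{2}(y))\rho_{\{\cdot,\cdot\}}(\beta(z))(\gamma(a))-\rho_{\{\cdot,\cdot\}}(\beta^{2}(z))\rho_{\{\cdot,\cdot\}}(\beta(y))(\gamma(a))$. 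Forming the cyclic sum over $(x,a),(y,b),(z,c)$, the six monomials of shape $\rho_{\{\cdot,\cdot\}}(\beta^{2}(\cdot))\rho_{\{\cdot,\cdot\}}(\beta(\cdot))(\gamma(\cdot))$ occur in pairs with opposite signs and cancel, so the $V$-part vanishes as well; hence the BiHom-Jacobi identity holds and $(A\oplus V,[\cdot,\cdot]_{A\oplus V},\widetilde{\alpha},\widetilde{\beta})$ is a BiHom-Lie algebra. Throughout, the bijectivity of $\alpha$ and $\nu$ is what legitimizes $\alpha^{-1}$, $\nu^{-1}$ and the rearrangements of the twisting maps; this is the only place the bijectivity hypothesis enters.
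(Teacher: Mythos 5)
Your verification is correct: all four axioms (commutation of $\widetilde{\alpha}$ and $\widetilde{\beta}$, multiplicativity, BiHom-skew-symmetry, BiHom-Jacobi) are checked accurately, the use of the representation identity with $x'=\alpha^{-1}\beta(y)$, $y'=\beta(z)$ evaluated on $\gamma(a)$ is exactly the right move, and the twelve $V$-terms of the cyclic sum do cancel in the six pairs you indicate, with bijectivity of $\alpha$ and $\nu$ entering only where you say it does. Note that the paper gives no proof of this statement at all --- it is quoted from the reference [RepBiHomLie] --- so your argument is the standard direct verification that the cited source carries out, and it stands on its own.
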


\begin{df}
Let $(A,\mu,\alpha,\beta)$ be a commutative BiHom-associative algebra. A representation (or a BiHom-module) on a vector space $V$ with respect to $\gamma,\nu\in End(V)$ is a linear map $\rho_{\mu}:A\longrightarrow End(V)$, such that for any $x, y\in A$, the following equalities are satisfied:
\begin{eqnarray}
& \rho_{\mu}(\alpha(x))\circ \gamma=\gamma\circ\rho_{\mu}(x),\ \rho_{\mu}(\beta(x))\circ \nu=\nu\circ\rho_{\mu}(x),
\\
& \rho_{\mu}(\mu(x, y))\circ \nu=\rho_{\mu}(\alpha(x))\rho_{\mu}(y).
\end{eqnarray}
\end{df}
Let $(A,\mu,\alpha,\beta)$  be a commutative BiHom-associative algebra and $(V,\rho_\mu,\gamma,\nu)$ be a representation of $A$. On the direct sum of the underlying vector spaces $A\oplus V$, define $\widetilde{\alpha},\widetilde{\beta}:A\oplus V\longrightarrow \mathcal{A}\oplus V$ by $$\widetilde{\alpha}(x+a)=\alpha(x)+\gamma(a)\ \textrm{and}\ \widetilde{\beta}(x+a)=\beta(x)+\nu(a) $$ and define a bilinear map $\mu_{A\oplus V}:A\oplus V \times \mathcal{A}\oplus V\longrightarrow A\oplus V$ by
\begin{eqnarray}
\mu_{A\oplus V}(x+a, y+b)&=& \mu(x, y)+\rho_\mu(x)(b)+\rho_\mu(\alpha^{-1}\beta(y))(\gamma\nu^{-1}(a)).
\end{eqnarray}
\begin{thm}\label{AssProduitSDirect} With the above notations, $(A\oplus V,\mu_{A\oplus V},\widetilde{\alpha},\widetilde{\beta})$ is a commutative BiHom-associative algebra.
\end{thm}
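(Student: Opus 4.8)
The plan is to verify, in turn, the four defining conditions of a commutative BiHom-associative algebra for the datum $(A\oplus V,\mu_{A\oplus V},\widetilde\alpha,\widetilde\beta)$: (i) $\widetilde\alpha\widetilde\beta=\widetilde\beta\widetilde\alpha$; (ii) $\widetilde\alpha$ and $\widetilde\beta$ are multiplicative with respect to $\mu_{A\oplus V}$; (iii) BiHom-commutativity of $\mu_{A\oplus V}$; (iv) the BiHom-associative condition for $\mu_{A\oplus V}$. Conditions (i)--(iii) are short. For (i), $\widetilde\alpha\widetilde\beta(x+a)=\alpha\beta(x)+\gamma\nu(a)=\widetilde\beta\widetilde\alpha(x+a)$ because $\alpha\beta=\beta\alpha$ on $A$ and $\gamma\nu=\nu\gamma$ on $V$. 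For (ii) one evaluates $\widetilde\alpha\circ\mu_{A\oplus V}$ and $\mu_{A\oplus V}\circ\widetilde\alpha^{\otimes 2}$ on a pair $(x+a,y+b)$: the $A$-components coincide because $\alpha\circ\mu=\mu\circ\alpha^{\otimes 2}$, and the $V$-components coincide after sliding $\gamma$ through $\rho_\mu$ by means of $\rho_\mu(\alpha(x))\circ\gamma=\gamma\circ\rho_\mu(x)$, using $\alpha^{-1}\beta\alpha=\beta$ (which holds since $\alpha\beta=\beta\alpha$) and the commutation $\gamma\nu=\nu\gamma$; the computation for $\widetilde\beta$ is symmetric. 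For (iii), $\mu_{A\oplus V}(\widetilde\beta(x+a),\widetilde\alpha(y+b))$ and $\mu_{A\oplus V}(\widetilde\beta(y+b),\widetilde\alpha(x+a))$ have equal $A$-components by the BiHom-commutativity of $\mu$, and each has $V$-component $\rho_\mu(\beta(x))(\gamma(b))+\rho_\mu(\beta(y))(\gamma(a))$, so they agree.

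The substance is (iv). Fix $x+a,\ y+b,\ z+c$ and expand both sides of
\[
\mu_{A\oplus V}\big(\widetilde\alpha(x+a),\mu_{A\oplus V}(y+b,z+c)\big)=\mu_{A\oplus V}\big(\mu_{A\oplus V}(x+a,y+b),\widetilde\beta(z+c)\big)
\]
into an $A$-component in $A$ and a $V$-component in $V$. The $A$-components are $\mu(\alpha(x),\mu(y,z))$ and $\mu(\mu(x,y),\beta(z))$, which agree by the BiHom-associative condition for $\mu$. The $V$-component of each side is a sum of three terms, one linear in $c$, one in $b$ and one in $a$; these are matched in pairs. The term linear in $c$ is $\rho_\mu(\alpha(x))\rho_\mu(y)(c)$ on the left and $\rho_\mu(\mu(x,y))(\nu(c))$ on the right, so they coincide by the module identity $\rho_\mu(\mu(x,y))\circ\nu=\rho_\mu(\alpha(x))\circ\rho_\mu(y)$.

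For the $b$- and $a$-terms I would first record the auxiliary identity $\rho_\mu(\beta(u))\circ\rho_\mu(v)=\rho_\mu(\beta(v))\circ\rho_\mu(u)$ for all $u,v\in A$, obtained by applying the module identity to both sides of the BiHom-commutativity relation $\mu(\beta(u'),\alpha(v'))=\mu(\beta(v'),\alpha(u'))$ and then replacing $\alpha(u'),\alpha(v')$ by $u,v$, using that $\alpha$ is bijective and commutes with $\beta$. With this, the $b$- and $a$-terms are matched by the same mechanism as the $c$-term: one slides $\gamma$ and $\nu^{\pm1}$ through the operators $\rho_\mu(\cdot)$ using the compatibility axioms, uses $\alpha^{-1}\circ\mu=\mu\circ(\alpha^{-1})^{\otimes 2}$ and $\beta\circ\mu=\mu\circ\beta^{\otimes 2}$ to rewrite $\alpha^{-1}\beta\mu(y,z)=\mu(\alpha^{-1}\beta(y),\alpha^{-1}\beta(z))$, applies the module identity, and finally the auxiliary commutativity to reorder the two $\rho_\mu$'s.

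I expect this bookkeeping --- keeping track of how $\alpha^{\pm1},\beta,\gamma,\nu^{\pm1}$ commute past $\rho_\mu(\cdot)$ and in which order to apply the compatibility axioms, the module identity and the auxiliary identity --- to be the only real obstacle; it is also the point at which the bijectivity of $\alpha$ and $\nu$ (already needed for $\mu_{A\oplus V}$ to be well defined) is genuinely used. Once all three pairs of $V$-terms are shown equal, the BiHom-associative condition for $\mu_{A\oplus V}$ holds, and together with (i)--(iii) this proves that $(A\oplus V,\mu_{A\oplus V},\widetilde\alpha,\widetilde\beta)$ is a commutative BiHom-associative algebra.
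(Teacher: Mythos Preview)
Your approach is correct and is essentially the same as the paper's: verify multiplicativity of $\widetilde\alpha$ (and $\widetilde\beta$), then expand both sides of the BiHom-associative identity and match the $A$- and $V$-components term by term. In fact you go further than the paper's argument: the paper only writes out the multiplicativity of $\widetilde\alpha$ and the two expansions of the associativity condition, and then stops, whereas you also check $\widetilde\alpha\widetilde\beta=\widetilde\beta\widetilde\alpha$ and the BiHom-commutativity of $\mu_{A\oplus V}$, and you isolate the auxiliary identity $\rho_\mu(\beta(u))\rho_\mu(v)=\rho_\mu(\beta(v))\rho_\mu(u)$ that is the real reason the $b$- and $a$-terms match. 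One cosmetic remark: the paper's own expansion of the right-hand side writes $\rho_\mu(\alpha\beta^{-1}(x))$, implicitly using $\beta^{-1}$; your route through the auxiliary identity and the sliding rules $\gamma\rho_\mu(w)=\rho_\mu(\alpha(w))\gamma$, $\rho_\mu(w)\nu^{-1}=\nu^{-1}\rho_\mu(\beta(w))$ avoids this and confirms that only the bijectivity of $\alpha$ and $\nu$ is needed, as you claim.
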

\begin{proof} By the fact that $\alpha,\beta$ are  algebra homomorphisms with respect to $\mu$, for $x,y\in A,\ a,b\in V$, we have
\begin{align*}
         \widetilde{\alpha}( \mu_{\mathcal{A}\oplus V}(x+a, y+b)) & = \widetilde{\alpha}(\mu(x, y)+\rho_\mu(x)(b)+\rho_\mu(\alpha^{-1}\beta(y))(\gamma\nu^{-1}(a)))   \\
            & = \alpha(\mu(x, y))+\gamma(\rho_\mu(x)(b))+\gamma(\rho_\mu(\alpha^{-1}\beta(y))(\gamma\nu^{-1}(a))))  \\
            & \mu(\alpha(x),\alpha( y)))+\rho_\mu(\alpha(x))(\gamma(b)))+\rho_\mu(\alpha^{-1}\beta(\alpha(y)))(\gamma\nu^{-1}(\gamma(a)))))\\
            &=\mu_{\mathcal{A}\oplus V}(\alpha(x)+\gamma(a), \alpha(y)+\gamma(b))\\
            &=\mu_{\mathcal{A}\oplus V}(\widetilde{\alpha}(x+a), \widetilde{\alpha}(y+b)).  \end{align*}
            If  $(\mathcal{A},\mu,\alpha,\beta)$ is a commutative BiHom-associative algebra,
\begin{eqnarray}\label{198a}
\mu_{A\oplus V}(\widetilde{\alpha}(x+a),\mu_{A\oplus V}\textbf{(}(y+b,z+c)\textbf{)})=\mu_{A\oplus V}\textbf{(}\mu_{A\oplus V}(x+a,y+b)\textbf{)},\widetilde{\beta}(z+c)),
\end{eqnarray}
for $x, y, z\in A$ and $a, b, c\in V$. Developing (\ref{198a}), we have
\begin{align*}
 &\mu_{A\oplus V}(\widetilde{\alpha}(x+a),\mu_{A\oplus V}\textbf{(}(y+b,z+c)\textbf{)})\\&=
\mu_{A\oplus V}(\widetilde{\alpha}(x+a),\mu(y,z)+\rho_\mu(y)(c)+\rho_\mu(\alpha^{-1}\beta(z))(\gamma\nu^{-1}(b)))\\
&=\mu(\alpha(x),\mu(y,z))+
\rho_\mu(\alpha(x))\circ\rho_\mu(y)(c))
+\rho_\mu(\alpha(x))\circ\rho_\mu(\alpha^{-1}\beta(z))(\gamma\nu^{-1}(b)))\\
&+\rho_\mu(\mu(\alpha^{-1}\beta(y),\alpha^{-1}\beta(z)))(\gamma^2\nu^{-1}(a)).
\end{align*} Similarly
\begin{align*}
 &\mu_{A\oplus V}\textbf{(}\mu_{A\oplus V}(x+a,y+b),\widetilde{\beta}(z+c)\textbf{)}\\&=\mu_{A\oplus V}\textbf{(}\mu(x,y)+\rho_\mu(x)(b)+
 \rho_\mu(\alpha^{-1}\beta(y))(\gamma\nu^{-1}(a)),\widetilde{\beta}(z+c)\textbf{)}
\\
&=\mu(\mu(x,y),\beta(z))+\rho_\mu(\mu(x,y))\circ \nu(c)+\rho_\mu(\alpha^{-1}\beta^2(z))\circ\rho_\mu(\alpha\beta^{-1}(x))(\gamma\nu^{-1}(b))+\\&
\rho_\mu(\alpha^{-1}\beta^2(z))\circ\rho_\mu(y)(\gamma^2\nu^{-2}(a)).
\end{align*}

\end{proof}

\begin{df}
Let $(A,\{\cdot,\cdot\},\mu,\alpha,\beta)$ be a BiHom-Poisson algebra, $V$ be a vector space and $ \rho_{\{\cdot,\cdot\}},\rho_{\mu}:A\longrightarrow End(V)$ be two linear maps and also $\gamma,\nu: V \longrightarrow V$ be  two linear maps. Then $(V,\rho_{\{\cdot,\cdot\}},\rho_{\mu},\gamma,\nu)$ is called a representation of $A$ if $(V,\rho_{\{\cdot,\cdot\}},\gamma,\nu)$ is a representation of $(A,\{\cdot,\cdot\},\alpha,\beta)$ and $(V,\rho_{\mu},\gamma,\nu)$ is a representation of $(A,\mu,\alpha,\beta)$ and they are compatible in the sense that for any $x,y\in A$
\begin{eqnarray}
\label{RepComp1}\rho_{\{\cdot,\cdot\}}(\mu(x, y))\nu &=& \rho_{\mu}(\beta(y))\rho_{\{\cdot,\cdot\}}(x) +\rho_{\mu}(\alpha(x))\rho_{\{\cdot,\cdot\}}(y),\\
\label{RepComp2}\rho_{\mu}(\{\beta(x),y\})\nu &=&-\rho_{\mu}(\alpha\beta(x))\rho_{\{\cdot,\cdot\}}(y)
-\rho_{\{\cdot,\cdot\}}(\beta(y))\rho_{\mu}(\alpha(x)).
\end{eqnarray}
\end{df}
\begin{thm}
 Let $(A,\{\cdot,\cdot\},\mu,\alpha,\beta)$ be a BiHom-Poisson algebra  and $(V,\rho_{\{\cdot,\cdot\}},\rho_{\mu},\gamma,\nu)$ be a representation of $A$.
Then $(A\oplus V,\mu_{A\oplus V},\{\cdot,\cdot\}_{A\oplus V},\widetilde{\alpha},\widetilde{\beta})$ is a commutative BiHom-Poisson algebra, where the maps $\mu_{A\oplus V},\{\cdot,\cdot\}_{A\oplus V},\widetilde{\alpha}$ and $\widetilde{\beta}$ are defined in  Theorem \ref{AssProduitSDirect} and  Theorem \ref{LieProduitSDirect}.
\end{thm}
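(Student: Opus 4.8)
The plan is to mimic the proofs of Theorems~\ref{LieProduitSDirect} and \ref{AssProduitSDirect}: the two structural ingredients are already available. Since $(V,\rho_{\{\cdot,\cdot\}},\gamma,\nu)$ is a representation of the BiHom-Lie algebra $(A,\{\cdot,\cdot\},\alpha,\beta)$, Theorem~\ref{LieProduitSDirect} yields that $(A\oplus V,\{\cdot,\cdot\}_{A\oplus V},\widetilde{\alpha},\widetilde{\beta})$ is a BiHom-Lie algebra; and since $(V,\rho_{\mu},\gamma,\nu)$ is a representation of the commutative BiHom-associative algebra $(A,\mu,\alpha,\beta)$, Theorem~\ref{AssProduitSDirect} yields that $(A\oplus V,\mu_{A\oplus V},\widetilde{\alpha},\widetilde{\beta})$ is a commutative BiHom-associative algebra. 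Hence only the BiHom-Leibniz identity on $A\oplus V$ remains to be verified, namely
\[
\{\mu_{A\oplus V}(X,Y),\widetilde{\alpha}\widetilde{\beta}(Z)\}_{A\oplus V}
=\mu_{A\oplus V}(\{X,\widetilde{\beta}(Z)\}_{A\oplus V},\widetilde{\alpha}(Y))
+\mu_{A\oplus V}(\widetilde{\alpha}(X),\{Y,\widetilde{\alpha}(Z)\}_{A\oplus V}),
\]
for $X=x+a$, $Y=y+b$, $Z=z+c$ with $x,y,z\in A$ and $a,b,c\in V$.

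The computation proceeds by expanding each side using the formulas for $\mu_{A\oplus V}$, $\{\cdot,\cdot\}_{A\oplus V}$, $\widetilde{\alpha}$ and $\widetilde{\beta}$. Both sides split into a summand in $A$ and a summand in $V$, and, by construction, the $V$-summand is linear separately in $a$, $b$ and $c$; so I would match the two sides in four independent blocks. The $A$-block (the terms containing none of $a,b,c$) is exactly the BiHom-Leibniz identity for $(A,\{\cdot,\cdot\},\mu,\alpha,\beta)$, hence holds by hypothesis. The block linear in $c$ reduces, after intertwining $\gamma$ and $\nu$ through the various $\rho$'s, to the first compatibility relation \eqref{RepComp1} evaluated at appropriately twisted arguments. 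The blocks linear in $a$ and in $b$ reduce to the second compatibility relation \eqref{RepComp2}, together with the module axioms $\rho_{\mu}(\mu(x,y))\nu=\rho_{\mu}(\alpha(x))\rho_{\mu}(y)$ and $\rho_{\{\cdot,\cdot\}}(\{\beta(x),y\})\nu=\rho_{\{\cdot,\cdot\}}(\alpha\beta(x))\rho_{\{\cdot,\cdot\}}(y)-\rho_{\{\cdot,\cdot\}}(\beta(y))\rho_{\{\cdot,\cdot\}}(\alpha(x))$, the $\gamma,\nu$-equivariance of the two representations, and the bijectivity of $\alpha$ and $\nu$ (assumed throughout the semidirect product construction) in order to absorb the $\alpha^{-1}\beta$ and $\gamma\nu^{-1}$ factors; one uses freely that $\alpha,\beta,\gamma,\nu$ pairwise commute where composable.

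The conceptual content is light; the real effort is the bookkeeping of the twisting operators. The main obstacle will be matching the $a$- and $b$-terms: after substituting $X=x+a$, $Y=y+b$, $Z=z+c$, the inner brackets and products create arguments such as $\gamma\nu^{-1}(a)$ sitting inside $\rho_{\mu}(\alpha^{-1}\beta(y))$ or $\rho_{\{\cdot,\cdot\}}(\alpha^{-1}\beta(y))$, and one must check that \eqref{RepComp1}, \eqref{RepComp2} and the module axioms apply after the correct change of variables and after commuting the appropriate powers of $\gamma$ and $\nu$ to one side. I expect each block to match exactly once the twists are normalized; should a spurious common power of $\nu$ or $\gamma$ persist on both sides, it is removed by bijectivity of $\nu$ (respectively by the fact that $\gamma$ acts identically on both sides). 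No identity beyond \eqref{RepComp1}, \eqref{RepComp2} and the representation axioms of $\rho_{\{\cdot,\cdot\}}$ and $\rho_{\mu}$ should be needed.
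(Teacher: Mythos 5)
Your proposal matches the paper's proof: the paper likewise takes the BiHom-Lie and BiHom-commutative BiHom-associative structures on $A\oplus V$ as given by Theorems \ref{LieProduitSDirect} and \ref{AssProduitSDirect}, states that ``we need only to show that the Leibniz identity is satisfied,'' and then verifies it by expanding $\{\mu_{A\oplus V}(x+a,y+b),\widetilde{\alpha}\widetilde{\beta}(z+c)\}_{A\oplus V}$ against the two right-hand terms and cancelling block by block (the $A$-part by the Leibniz rule in $A$, the $a$-, $b$-, $c$-parts by the intertwining axioms together with \eqref{RepComp1} and \eqref{RepComp2}). So the approach and the identities invoked are essentially the same; the only difference is that you describe the bookkeeping of the $\alpha^{-1}\beta$ and $\gamma\nu^{-1}$ twists rather than writing it out, which is exactly the content of the paper's displayed computation.
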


\begin{proof}We need only to show that the Leibniz identity is satisfied.
  Let $x,y,z\in A$ and $a,b,c\in V$, we have
  \begin{align*}
     & \{\mu_{A\oplus V}(x+a,y+b),\widetilde{\alpha}\widetilde{\beta}(z+c)\}_{A\oplus V}-\mu_{A\oplus V}(\{x+a, \widetilde{\beta}(z+c)\}_{A\oplus V},\widetilde{\alpha}(y+b))\\&-\mu_{A\oplus V}(\widetilde{\alpha}(x+a), \{y+b,\widetilde{\alpha}(z+c)\}_{A\oplus V}) \\
      =& \{\mu(x,y)+\rho_{\mu}(x)(b)+\rho_{\mu}(\alpha^{-1}\beta(y))
      \gamma\nu^{-1}(a),\widetilde{\alpha}\widetilde{\beta}(z+c)\}_{A\oplus V} \\&-\mu_{A\oplus V}(\{x,\beta(z)\}+\rho_{\{\cdot,\cdot\}}(\alpha^{-1}\beta(x))\nu(c)    -\rho_{\{\cdot,\cdot\}}(\alpha^{-1}\beta^2(z))\gamma\nu^{-1}(a),\widetilde{\alpha}(y+b))\\&-\mu_{A\oplus V}(\widetilde{\alpha}(x+a),\{y,\alpha(z)\}+\rho_{\{\cdot,\cdot\}}(y)\gamma(c)-\rho_{\{\cdot,\cdot\}}(\beta(z))\gamma\nu^{-1}(b)) \\
      =&\{\mu(x,y),\alpha\beta(z)\}+\rho_{\{\cdot,\cdot\}}(\mu(x,y))\gamma\nu(c)
      -\rho_{\{\cdot,\cdot\}}(\beta^2(z))\gamma\nu^{-1}(\rho_{\mu}(x)(b)+\rho_{\mu}(\alpha^{-1}\beta(y))
      \gamma\nu^{-1}(a))\\
      &-\mu(\{x,\beta(z)\},\alpha(y))-\rho_{\mu}(\{x,\beta(z)\})\gamma(b)
      -\rho_{\mu}(\beta(y))\gamma\nu^{-1}(\rho_{\{\cdot,\cdot\}}(\alpha^{-1}\beta(x))\nu(c)    -\rho_{\{\cdot,\cdot\}}(\alpha^{-1}\beta^2(z))\gamma\nu^{-1}(a))
      \\&-\mu(\alpha(x),\{y,\alpha(z)\})-\rho_{\mu}(\alpha(x))(\rho_{\{\cdot,\cdot\}}(y)\gamma(c)-\rho_{\{\cdot,\cdot\}}(\alpha(z))(b))
      -\rho_{\mu}(\{\alpha^{-1}\beta(y),\beta(z)\})\gamma^2\nu^{-1}(a)\\
      =&\{\mu(x,y),\alpha\beta(z)\}+\rho_{\{\cdot,\cdot\}}(\mu(x,y))\gamma\nu(c)
      -\rho_{\{\cdot,\cdot\}}(\beta^2(z))(\rho_{\mu}(\alpha\beta^{-1}(x))\gamma\nu^{-1}(b))\\
      &-\rho_{\{\cdot,\cdot\}}(\beta^2(z))(\rho_{\mu}(y)
      \gamma^2\nu^{-2}(a))      -\mu(\{x,\beta(z)\},\alpha(y))-\rho_{\mu}(\{x,\beta(z)\})\gamma(b)
\\&      -\rho_{\mu}(\beta(y))(\rho_{\{\cdot,\cdot\}}(x)\gamma(c))    -\rho_{\mu}(\beta(y))(\rho_{\{\cdot,\cdot\}}(\beta(z))\gamma^2\nu^{-2}(a))
      -\mu(\alpha(x),\{y,\alpha(z)\})\\&-\rho_{\mu}(\alpha(x))(\rho_{\{\cdot,\cdot\}}(y)\gamma(c)
)      +\rho_{\mu}(\alpha(x))(\rho_{\{\cdot,\cdot\}}(\alpha(z))(b))
      -\rho_{\mu}(\{\alpha^{-1}\beta(y),\beta(z)\})\gamma^2\nu^{-1}(a)=0.
  \end{align*}
\end{proof}

%%%%%%%%%%%%%%%%%%%%%%%%%%%%%%%%%%%%%%%%%
\section{Admissible BiHom-Poisson algebras}
\label{sec:admissible}
%%%%%%%%%%%%%%%%%%%%%%%%%%%%%%%%%%%%%%%%%

A Poisson algebra has two binary operations, the Lie bracket and the commutative associative product. In this section we describe BiHom-Poisson algebra using only one binary operation and the twisting maps via the polarization-depolarization procedure.

%%%%%%%%%%%%%%%%%%%%%%
\begin{df}
\label{def:admissible}
Let $(A,\mu,\alpha,\beta)$ be a BiHom-algebra.  Then $A$ is called an \textbf{admissible BiHom-Poisson algebra} if it satisfies
\begin{align}
\label{admissibility}
as_{\alpha,\beta}(\beta(x),\alpha(y),\alpha^{2}(z)) &= \frac{1}{3}\{\mu(\mu(\beta(x),\alpha\beta(z)),\alpha^2(y)) - \mu(\mu(\beta^{2}(z),\alpha(x)),\alpha^{2}(y))\nonumber\\& + \mu(\mu(\beta(y),\alpha\beta(z)),\alpha^2(x)) - \mu(\mu(\beta(y),\alpha(x))\alpha^{2}\beta(z))\},
\end{align}
for all $x,y,z \in A$, where $as_{\alpha,\beta}$ is the BiHom-associator  of $A$ defined by
\begin{equation}
\label{associator}
as_{\alpha,\beta}(x,y,z) = \mu(\mu(x,z),\beta(y)) - \mu(\alpha(x),\mu(x,z)) \end{equation}
\end{df}
%%%%%%%%%%%%%%%%%%%%%%
If the BiHom-algebra $(A,\mu,\alpha,\beta)$ is regular then the identity  \eqref{admissibility} is equivalent to
\begin{align}
\label{admissibility2}
as_{\alpha,\beta}(x,y,z) &= \frac{1}{3}\{\mu(\mu(x,\alpha^{-1}\beta(z)),\alpha(y)) - \mu(\mu(\alpha^{-2}\beta^{2}(z),\alpha\beta^{-1}(x)),\alpha(y))\nonumber \\&+ \mu(\mu(\alpha^{-1}\beta(y),\alpha^{-1}\beta(z)),\alpha^{2}\beta^{-1}(x)) - \mu(\mu(\alpha^{-1}\beta(y),\alpha\beta^{-1}(x))\beta(z))\}.
\end{align}

\begin{prop}
  Let $(A,\mu)$ be an admissible Poisson algebra and $\alpha,\beta:A\rightarrow A$ two commuting Poisson algebra morphisms. Then $(A,\mu_{\alpha,\beta}=\mu\circ(\alpha\otimes\beta),\alpha,\beta)$ is an admissible BiHom-Poisson algebra.
  \end{prop}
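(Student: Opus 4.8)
The plan is to run the same twisting argument used in Proposition \ref{twist}: since $\alpha$ and $\beta$ commute and are algebra morphisms of $(A,\mu)$, any expression built from $\mu_{\alpha,\beta}=\mu\circ(\alpha\otimes\beta)$ can be rewritten, by transporting $\alpha$ and $\beta$ onto the innermost factors, as the corresponding expression built from $\mu$ applied to suitably twisted arguments; once this is done, the admissibility identity of $(A,\mu)$ is exactly what is needed. The easy part is that $(A,\mu_{\alpha,\beta},\alpha,\beta)$ is a BiHom-algebra: $\alpha\beta=\beta\alpha$ is assumed, and multiplicativity of $\alpha,\beta$ for $\mu_{\alpha,\beta}$ is a one-line check, e.g. $\alpha(\mu_{\alpha,\beta}(x,y))=\alpha\mu(\alpha(x),\beta(y))=\mu(\alpha^{2}(x),\alpha\beta(y))=\mu(\alpha(\alpha(x)),\beta(\alpha(y)))=\mu_{\alpha,\beta}(\alpha(x),\alpha(y))$, using only that $\alpha$ is a morphism of $\mu$ together with $\alpha\beta=\beta\alpha$, and symmetrically for $\beta$. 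So the whole content is the verification of \eqref{admissibility} for $\mu_{\alpha,\beta}$.

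The main computation is to evaluate the BiHom-associator $as_{\alpha,\beta}$ of $(A,\mu_{\alpha,\beta},\alpha,\beta)$, namely $\mu_{\alpha,\beta}(\mu_{\alpha,\beta}(x,y),\beta(z))-\mu_{\alpha,\beta}(\alpha(x),\mu_{\alpha,\beta}(y,z))$. Expanding and pushing the twists inward gives
\[
as_{\alpha,\beta}(x,y,z)=\mu\bigl(\mu(\alpha^{2}(x),\alpha\beta(y)),\beta^{2}(z)\bigr)-\mu\bigl(\alpha^{2}(x),\mu(\alpha\beta(y),\beta^{2}(z))\bigr),
\]
that is, the ordinary associator of $\mu$ evaluated at $\bigl(\alpha^{2}(x),\alpha\beta(y),\beta^{2}(z)\bigr)$; in particular $as_{\alpha,\beta}(\beta(x),\alpha(y),\alpha^{2}(z))$ is the ordinary $\mu$-associator at $\bigl(\alpha^{2}\beta(x),\alpha^{2}\beta(y),\alpha^{2}\beta^{2}(z)\bigr)$. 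In the same way each of the four terms on the right-hand side of \eqref{admissibility}, read with $\mu_{\alpha,\beta}$ in place of $\mu$, rewrites after pushing the twists inward; for example $\mu_{\alpha,\beta}(\mu_{\alpha,\beta}(\beta(x),\alpha\beta(z)),\alpha^{2}(y))=\mu(\mu(\alpha^{2}\beta(x),\alpha^{2}\beta^{2}(z)),\alpha^{2}\beta(y))$, and likewise for the other three.

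Finally, setting $X=\alpha^{2}\beta(x)$, $Y=\alpha^{2}\beta(y)$, $Z=\alpha^{2}\beta^{2}(z)$, the identity \eqref{admissibility} for $(A,\mu_{\alpha,\beta},\alpha,\beta)$ collapses term by term to
\[
\mu(\mu(X,Y),Z)-\mu(X,\mu(Y,Z))=\frac{1}{3}\bigl(\mu(\mu(X,Z),Y)-\mu(\mu(Z,X),Y)+\mu(\mu(Y,Z),X)-\mu(\mu(Y,X),Z)\bigr),
\]
which is precisely the admissibility identity of the ordinary Poisson algebra $(A,\mu)$ (the case $\alpha=\beta=\mathrm{id}$ of \eqref{admissibility}) evaluated at $X,Y,Z$, and hence holds by hypothesis. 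This proves \eqref{admissibility} for $\mu_{\alpha,\beta}$, so $(A,\mu_{\alpha,\beta},\alpha,\beta)$ is an admissible BiHom-Poisson algebra.

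I do not expect a genuine obstacle here: the argument is entirely bookkeeping of the twisting maps, of exactly the type already carried out in Proposition \ref{twist}. The only point that demands care is keeping the powers of $\alpha$ and $\beta$ straight when they are moved through the nested products on the right-hand side of \eqref{admissibility}, and noticing that this is precisely why the left-hand side there is taken at the asymmetric triple $(\beta(x),\alpha(y),\alpha^{2}(z))$ rather than at $(x,y,z)$ — it is this asymmetry that makes the twisted arguments on the two sides agree. (If the regular case is desired one may instead start from the equivalent form \eqref{admissibility2}, but the computation above does not require $\alpha$ or $\beta$ to be bijective.)
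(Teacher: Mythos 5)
Your proof is correct and follows essentially the same route as the paper: expand every occurrence of $\mu_{\alpha,\beta}$ using the morphism properties and $\alpha\beta=\beta\alpha$, so that the twisted admissibility identity at $(\beta(x),\alpha(y),\alpha^{2}(z))$ collapses to the ordinary admissibility identity of $(A,\mu)$ evaluated at $X=\alpha^{2}\beta(x)$, $Y=\alpha^{2}\beta(y)$, $Z=\alpha^{2}\beta^{2}(z)$. Your write-up just makes explicit the bookkeeping (and the multiplicativity check) that the paper's computation performs in one display.
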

  \begin{proof}
Let $x,y,z\in A$
\small{\begin{align*}
  &\mu_{\alpha,\beta}(\mu_{\alpha,\beta}(\beta(x),\alpha(y)),\alpha^{2}\beta(z))
  -\mu_{\alpha,\beta}(\alpha\beta(x),\mu_{\alpha,\beta}(\alpha(y),\alpha^{2}(z)))- \frac{1}{3}\{\mu_{\alpha,\beta}(\mu_{\alpha,\beta}(\beta(x),\alpha\beta(z)),\alpha^2(y))\\& - \mu_{\alpha,\beta}(\mu_{\alpha,\beta}(\beta^{2}(z),\alpha(x)),\alpha^{2}(y)) + \mu_{\alpha,\beta}(\mu_{\alpha,\beta}(\beta(y),\alpha\beta(z)),\alpha^2(x)) - \mu_{\alpha,\beta}(\mu_{\alpha,\beta}(\beta(y),\alpha(x))\alpha^{2}\beta(z))\}\\
  =&\mu(\mu(\alpha^2\beta(x),\alpha^2\beta(y)),\alpha^{2}\beta^2(z))
  -\mu(\alpha^2\beta(x),\mu(\alpha^2\beta(y),\alpha^{2}\beta^2(z)))- \frac{1}{3}\{\mu(\mu(\alpha^2\beta(x),\alpha^2\beta^2(z)),\alpha^2\beta(y))\\& - \mu(\mu(\alpha^2\beta^{2}(z),\alpha^2\beta(x)),\alpha^{2}\beta(y)) + \mu(\mu(\alpha^2\beta(y),\alpha^2\beta^2(z)),\alpha^2\beta(x)) - \mu(\mu(\alpha^2\beta(y),\alpha^2\beta(x))\alpha^{2}\beta^2(z))\}\\
  =0
\end{align*}}
  \end{proof}

As usual in \eqref{admissibility} the product $\mu$ is denoted by juxtapositions of elements in $A$.  An admissible BiHom-Poisson algebra with $\alpha=\beta = Id$ is exactly an \textbf{admissible Poisson algebra} as defined in \cite{gr}.

To compare BiHom-Poisson algebras and admissible BiHom-Poisson algebras, we need the following function, which generalizes a  similar function in \cite{mr}.

%%%%%%%%%%%%%%%%%%%%%%%%%
\begin{df}
Let $(A,\mu,\alpha,\beta)$ be a regular BiHom-algebra.  Define the quadruple
\begin{equation}
\label{pa}
P(A) = \left(A, \{\cdot,\cdot\} , \bullet , \alpha, \beta\right),
\end{equation}
where $\{x,y\} = \frac{1}{2}(\mu(x,y) - \mu(\alpha^{-1}\beta(y)\alpha\beta^{-1}(x)))$ and $x\bullet y = \frac{1}{2}(\mu(x,y) +\mu(\alpha^{-1}\beta(y)\alpha\beta^{-1}(x)))$
called the \textbf{polarization} of $A$.  We call $P$ the \textbf{polarization function}.
\end{df}
%%%%%%%%%%%%%%%%%%%%%%%%%

The following result says that admissible BiHom-Poisson algebras, and only these BiHom-algebras, give rise to BiHom-Poisson algebras via polarization.

%%%%%%%%%%%%%%%%%%
\begin{thm}
\label{thm:polar}
Let $(A,\mu,\alpha,\beta)$ be a regular BiHom-algebra.  Then the polarization $P(A)$ is a regular BiHom-Poisson algebra if and only if $A$ is an admissible BiHom-Poisson algebra.
\end{thm}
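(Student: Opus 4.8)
The plan is to prove both implications by a direct computation that disentangles the BiHom-Lie and BiHom-associative parts of the polarization $P(A)=(A,\{\cdot,\cdot\},\bullet,\alpha,\beta)$. First I would record the two building blocks: for a regular BiHom-algebra the symmetrization $x\bullet y=\frac12(\mu(x,y)+\mu(\alpha^{-1}\beta(y),\alpha\beta^{-1}(x)))$ is automatically BiHom-commutative, and the antisymmetrization $\{x,y\}=\frac12(\mu(x,y)-\mu(\alpha^{-1}\beta(y),\alpha\beta^{-1}(x)))$ is automatically BiHom-skew-symmetric; both are compatible with $\alpha,\beta$ because $\alpha,\beta$ commute and are morphisms for $\mu$. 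So the content of "$P(A)$ is a BiHom-Poisson algebra" reduces to three identities: (i) $\bullet$ is BiHom-associative; (ii) $\{\cdot,\cdot\}$ satisfies the BiHom-Jacobi identity; (iii) the BiHom-Leibniz compatibility \eqref{a} holds between $\bullet$ and $\{\cdot,\cdot\}$.

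Next I would expand each of (i), (ii), (iii) in terms of $\mu$. Writing $\mathrm{sym}=as_{\alpha,\beta}$ for the BiHom-associator and expanding $x\bullet y$, the associator of $\bullet$ is a sum of four $\mu$-associators (with twisted arguments) plus "mixed" quadruple products; after using that $\alpha,\beta$ are commuting morphisms and rescaling, $as_{\bullet}(\beta(x),\alpha(y),\alpha^2(z))$ becomes exactly $\tfrac14\big[as_{\alpha,\beta}(\beta(x),\alpha(y),\alpha^2(z)) + (\text{three permuted/twisted associators})\big]$ together with terms that are precisely the right-hand side combination appearing in \eqref{admissibility}. Similarly I would expand the BiHom-Jacobi expression $\circlearrowleft\{\beta^2(x),\{\beta(y),\alpha(z)\}\}$: substituting the definition of $\{\cdot,\cdot\}$ twice produces eight quadruple $\mu$-products; grouping them in cyclic triples, the "associative-looking" part vanishes by BiHom-associativity exactly when \eqref{admissibility} holds, and the remaining part is again controlled by the same admissibility combination. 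The Leibniz identity (iii) is the cleanest: expanding $\{x\bullet y,\alpha\beta(z)\}-x\bullet\{y,\alpha(z)\}\cdot(\ldots)-\ldots$ via the definitions yields a linear combination of associators that one shows equals zero \emph{iff} \eqref{admissibility} holds. The key algebraic observation to make all of this work is that $P$ is "almost" invertible: the depolarization $\mu = x\bullet y + \{x,y\}$ recovers $\mu$ from $P(A)$, so every identity on $P(A)$ translates back to a single identity on $\mu$, namely \eqref{admissibility}.

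Concretely, I would carry out the steps in this order. Step 1: verify BiHom-commutativity of $\bullet$, BiHom-skew-symmetry of $\{\cdot,\cdot\}$, and $\alpha,\beta$-multiplicativity of both — all immediate from $\alpha\beta=\beta\alpha$ and the morphism property. Step 2: assume $A$ is admissible and prove the three remaining BiHom-Poisson axioms by substitution and repeated use of \eqref{Bihom associative}; the bookkeeping here is the bulk of the work, so I would organize the sixteen-term expansions into blocks indexed by the sign pattern $(\pm,\pm)$ coming from the two copies of $\{\cdot,\cdot\}$ (or of $\bullet$), and show each block is handled either by BiHom-associativity or by one application of \eqref{admissibility}. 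Step 3: conversely, assume $P(A)$ is a BiHom-Poisson algebra; add the associativity-of-$\bullet$ relation to (a suitably twisted copy of) the Leibniz relation so that the mixed quadruple-product terms cancel and only the combination $as_{\alpha,\beta}(\beta(x),\alpha(y),\alpha^2(z)) - \tfrac13(\ldots)$ survives, forcing \eqref{admissibility}; here regularity is used to cancel the common invertible twisting maps.

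The main obstacle I anticipate is purely combinatorial rather than conceptual: keeping track of which twisted arguments ($\beta^2(x)$, $\alpha\beta^{-1}(x)$, $\alpha^{-1}\beta^2(z)$, etc.) appear where, since the non-symmetric definition of $\bullet$ and $\{\cdot,\cdot\}$ means the "obvious" cancellations only occur after moving $\alpha,\beta,\alpha^{-1},\beta^{-1}$ past $\mu$ using multiplicativity. I would manage this by always normalizing every quadruple product to the form $\mu(\mu(\mu(a,b),\gamma_1(c)),\gamma_2(d))$ with a fixed association pattern (using BiHom-associativity to move parentheses), after which matching terms is mechanical; the factor $\tfrac13$ in \eqref{admissibility} is exactly what is needed for the counting to balance, and I expect that checking this numerical coincidence is the one place where a sign or coefficient error is most likely, so I would double-check it against the $\alpha=\beta=\mathrm{Id}$ case, where the statement must reduce to the classical fact (from \cite{gr}, \cite{mr}) that admissible Poisson algebras are exactly those whose polarization is Poisson.
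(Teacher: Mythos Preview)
Your plan coincides with the paper's: both arguments expand each of the three remaining BiHom-Poisson axioms for $P(A)$ (BiHom-Jacobi, BiHom-associativity of $\bullet$, BiHom-Leibniz) as a signed sum of BiHom-associators $as_{\alpha,\beta}$ of $\mu$ with twisted arguments and show each sum vanishes under the admissibility identity \eqref{admissibility}. The paper's written proof is in fact terser than yours on the converse direction, handling it implicitly via the depolarization in Corollary~\ref{cor:polar}, so your Step~3 (combining the $\bullet$-associativity and Leibniz relations to isolate \eqref{admissibility}) is a more explicit account of that half than the paper itself provides.
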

%%%%%%%%%%%%%%%%%%

\begin{proof}
   For any $x, y, z\in A$, we will check that the $(A, \{\cdot,\cdot\},\alpha,\beta)$ is a BiHom-Lie algebra. Indeed, we have
   \begin{align*}
\{\beta(x),\alpha(y)\}=\beta(x)\alpha(y)-\beta(y)\alpha(x)=-\{\beta(y),\alpha(x)\},
\end{align*}
so the antisymmetry of $\{\cdot,\cdot\}$ is satisfied. Now, we verify the BiHom-Jacobi identity
\begin{align*}
&\{\beta^2(x), \{\beta(y), \alpha(z)\}\}+\{\beta^2(y), \{\beta(z), \alpha(x)\}\}+\{\beta^2(z), \{\beta(x), \alpha(y)\}\}\\
&=\{\beta^2(x), \frac{1}{2}(\beta(y)\cdot \alpha(z)-\beta(z)\cdot \alpha(y))\}+
\{\beta^2(y), \frac{1}{2}(\beta(z)\cdot \alpha(x)-\beta(x)\cdot \alpha(z))\}\\
&+\{\beta^2(z), \frac{1}{2}(\beta(x)\cdot \alpha(y)-\beta(y)\cdot \alpha(x))\}\\
&=\frac{1}{4}\Big(-as_{\alpha\beta}(\beta(x), \beta(y), \alpha(z))+as_{\alpha\beta}(\alpha^{-1}\beta^{2}(x), \beta(z), \alpha(y))\\
&-as_{\alpha\beta}(\alpha^{-1}\beta^{2}(y), \beta(z), \alpha(x))+as_{\alpha\beta}(\alpha^{-1}\beta^{2}(z), \beta(y), \alpha(x))\\
&+as_{\alpha\beta}(\alpha^{-1}\beta^{2}(y),\beta(x), \alpha(z))-as_{\alpha\beta}(\alpha^{-1}\beta^{2}(z), \beta(x), \alpha(y))\Big)=0.
\end{align*}
Next, we check that $(A, \bullet,\alpha,\beta)$ is a BiHom-commutative BiHom-associative algebra. Indeed, for any $x, y, z\in A $, the prove of BiHom-commutativity of $\mu$ is similar to the BiHom-skewsymmetry of $\{\cdot,\cdot\}$ checked before.
\begin{align*}
&(x\bullet y)\bullet \beta(z)-\alpha(x)\bullet(y\bullet z)=\frac{1}{2}(\mu(x,y)
-\mu(\alpha^{-1}\beta(y), \alpha\beta^{-1}(x)))\bullet \beta(z)
-\alpha(x)\bullet\frac{1}{2}(\mu(y,z)\\&-\mu(\alpha^{-1}\beta(z), \alpha\beta^{-1}(y)))=\frac{1}{4}\Big(as_{\alpha\beta}(x, y, z)-as_{\alpha\beta}(\alpha^{-2}\beta^{2}(z), y, \alpha^{2}\beta^{-2}(x))\\
&+as_{\alpha\beta}(\alpha^{-2}\beta^{2}(z), \alpha\beta^{-1}(x), \alpha\beta^{-1}(y))+\mu(\mu(\alpha^{-2}\beta^{2}(z),\alpha\beta^{-1}(x)),\alpha(y))\\
&-\mu(\mu(\alpha^{-1}\beta(y),\alpha\beta^{-1}(x)),\beta(z))+\mu(\mu(\alpha^{-1}\beta(y),\alpha^{-1}\beta(z)),\alpha^{2}\beta^{-1}(x))\\
&-as_{\alpha\beta}(x,\alpha^{-1}\beta(y), \alpha\beta^{-1}(z))-\mu(\mu(x,\alpha^{-1}\beta(y)),\alpha(z))=0.
\end{align*}
Finally, we check the condition :\ \ \
$\{x\bullet y,\alpha\beta (z)\}-\{x, \beta(z)\}\bullet \alpha(y)-\alpha(x)\bullet\{y,\alpha(z)\}.$ Indeed, we have
\begin{align*}
&\{x\bullet y,\alpha\beta (z)\}-\{x, \beta(z)\}\bullet \alpha(y)-\alpha(x)\bullet\{y,\alpha(z)\}\\
&=\frac{1}{4}\Big(as_{\alpha\beta}(x, y, \alpha(z))-as_{\alpha\beta}(x, \beta(z), \alpha\beta^{-1}(y))-as_{\alpha\beta}(\alpha^{-1}\beta(y),\beta( z), \alpha^{2}\beta^{-2}(x))+as(\alpha^{-1}\beta^{2}(z), y, \alpha^{2}\beta^{-2}(x))\\
&+as_{\alpha\beta}(\alpha^{-1}\beta^{2}(z), \alpha\beta^{-1}( x), \alpha\beta^{-1}(y))+as_{\alpha\beta}(\alpha^{-1}\beta(y),\alpha\beta^{-1} (x), \alpha(z))\Big)=0.
\end{align*}
The proof is finished.
\end{proof}

The following result says that there is a bijective correspondence between admissible BiHom-Poisson algebras and BiHom-Poisson algebras via polarization and depolarization. 

%%%%%%%%%%%%%%%%%%%%%%%%%
\begin{cor}\label{cor:polar}
Let $(A,\{\cdot,\cdot\},\bullet,\alpha,\beta)$ be a  BiHom-Poisson algebra.  Define the BiHom-algebra
\begin{equation}
\label{pminusa}
P^-(A) = \left(A,\mu = \{\cdot,\cdot\}+\bullet, \alpha,\beta\right),
\end{equation}
then $P^-(A)$ is an admissible BiHom-Poisson algebra
called the \textbf{depolarization} of $A$.  We call $P^-$ the \textbf{depolarization function}.
\end{cor}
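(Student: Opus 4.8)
The statement will follow from Theorem~\ref{thm:polar} once one recognizes that $P^-$ is a two-sided inverse of the polarization function $P$. First I would record that $P^-(A) = (A,\mu = \{\cdot,\cdot\}+\bullet,\alpha,\beta)$ is a BiHom-algebra: the relation $\alpha\beta = \beta\alpha$ is inherited from $A$, and since $\alpha$ and $\beta$ commute with the BiHom-Lie bracket $\{\cdot,\cdot\}$ and with the BiHom-associative product $\bullet$ (both being part of the BiHom-Poisson data), they commute with their sum $\mu$; if in addition $A$ is regular, then $\alpha$ and $\beta$ are bijective and $P^-(A)$ is a regular BiHom-algebra.

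The heart of the argument is the identity $P\bigl(P^-(A)\bigr) = (A,\{\cdot,\cdot\},\bullet,\alpha,\beta)$. To prove it I would feed $\mu = \{\cdot,\cdot\}+\bullet$ into the polarization formulas of \eqref{pa} and expand the cross term
$$\mu\bigl(\alpha^{-1}\beta(y),\alpha\beta^{-1}(x)\bigr) = \bigl\{\alpha^{-1}\beta(y),\alpha\beta^{-1}(x)\bigr\} + \alpha^{-1}\beta(y)\bullet\alpha\beta^{-1}(x).$$
Writing $\alpha^{-1}\beta(y) = \beta(\alpha^{-1}(y))$ and $\alpha\beta^{-1}(x) = \alpha(\beta^{-1}(x))$, the BiHom-skew-symmetry $\{\beta(u),\alpha(v)\} = -\{\beta(v),\alpha(u)\}$ of the bracket gives $\{\alpha^{-1}\beta(y),\alpha\beta^{-1}(x)\} = -\{x,y\}$, and the BiHom-commutativity $\beta(u)\bullet\alpha(v) = \beta(v)\bullet\alpha(u)$ of $\bullet$ gives $\alpha^{-1}\beta(y)\bullet\alpha\beta^{-1}(x) = x\bullet y$; hence $\mu(\alpha^{-1}\beta(y),\alpha\beta^{-1}(x)) = -\{x,y\}+x\bullet y$. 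Substituting this together with $\mu(x,y) = \{x,y\}+x\bullet y$ into $\tfrac12(\mu(x,y)\mp\mu(\alpha^{-1}\beta(y),\alpha\beta^{-1}(x)))$, the ``bracket halves'' add to $\{x,y\}$ and the ``product halves'' add to $x\bullet y$, so the polarization of $P^-(A)$ returns exactly the original operations and the original maps $\alpha,\beta$.

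It then remains to conclude: since $A$ is a regular BiHom-Poisson algebra by hypothesis, $P(P^-(A)) = A$ is a regular BiHom-Poisson algebra, and Theorem~\ref{thm:polar}, applied to the regular BiHom-algebra $P^-(A)$, gives that $P^-(A)$ is an admissible BiHom-Poisson algebra, which is the claim. If one prefers to avoid the regularity assumption --- both $P^-$ and the admissibility identity \eqref{admissibility} involve no inverses --- one can instead verify \eqref{admissibility} for $P^-(A)$ directly, expanding the BiHom-associator $as_{\alpha,\beta}(\beta(x),\alpha(y),\alpha^2(z))$ of $\mu = \{\cdot,\cdot\}+\bullet$ into its summands, discarding the associator of $\bullet$ (which vanishes by BiHom-associativity of $(A,\bullet,\alpha,\beta)$), and matching the surviving mixed $\{\cdot,\cdot\}$--$\bullet$ terms with the right-hand side via the Leibniz compatibility identity, the BiHom-Jacobi identity, BiHom-skew-symmetry and BiHom-commutativity. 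The only subtle point in the first route is tracking the twisting maps in the equality $P(P^-(A)) = A$; in the direct route the main obstacle is the bookkeeping of the mixed terms, which is essentially the computation already carried out in the proof of Theorem~\ref{thm:polar}.
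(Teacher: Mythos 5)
Your proof is correct and rests on the same two pillars as the paper's: the mutual inverseness of the polarization $P$ and the depolarization $P^-$, together with Theorem \ref{thm:polar}. The difference lies in which composition you verify and which implication of the theorem you invoke. The paper's printed proof checks $P^-(P(A))=A$ for an admissible algebra $A$ (splitting $\mu$ into its BiHom-symmetric and BiHom-antisymmetric halves), which really substantiates the bijective-correspondence remark preceding the corollary rather than the corollary's own claim; you instead check the other composition, $P(P^-(A))=A$, using BiHom-skew-symmetry of $\{\cdot,\cdot\}$ and BiHom-commutativity of $\bullet$ to get $\mu(\alpha^{-1}\beta(y),\alpha\beta^{-1}(x))=-\{x,y\}+x\bullet y$, and then apply the ``only if'' direction of Theorem \ref{thm:polar} --- which is exactly the logical step needed to conclude that $P^-(A)$ is admissible, and which the paper leaves implicit. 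You also rightly flag the regularity issue: both Theorem \ref{thm:polar} and the formula \eqref{pa} involve $\alpha^{-1},\beta^{-1}$, so this route requires $\alpha,\beta$ bijective (an assumption the corollary's wording omits but the paper tacitly uses), and your fallback --- verifying \eqref{admissibility} directly for $\mu=\{\cdot,\cdot\}+\bullet$, where the associator of $\bullet$ vanishes and the mixed terms are absorbed by the Leibniz, Jacobi, skew-symmetry and commutativity identities --- is a legitimate way to dispense with regularity, at the price of redoing the bookkeeping already carried out in the proof of Theorem \ref{thm:polar}. In short, your argument is slightly more complete for the statement as worded, while the paper's version is shorter but proves the inverse identity in the opposite direction and relies on the reader to supply the step you made explicit.
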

%%%%%%%%%%%%%%%%%%%%%%%%%

%%%%%%%%%%%%%%%%%

\begin{proof}
If $(A,\mu,\alpha)$ is a regular admissible BiHom-Poisson algebra, then $P(A)$ is a BiHom-Poisson algebra by Theorem \ref{thm:polar}.  We have $P^-(P(A)) = A$ because
$$
\mu(x,y) = \frac{1}{2}(\mu(x,y) - \mu(\alpha^{-1}\beta(y),\alpha\beta^{-1}(x))) + \frac{1}{2}(\mu(x,y) + \mu(\alpha^{-1}\beta(y),\alpha\beta^{-1}(x)), \ \forall \ x,\ y\in A.
$$\end{proof}
%%%%%%%%%%%%%%%%%%%%%%
\section{Classification of BiHom-Poisson algebras}

Let $(A,\{\cdot,\cdot\},\mu,\alpha,\beta)$ be a  BiHom-Poisson algebra, in this section, we provide a list of 2-dimensional BiHom-Poisson algebras, where the morphisms $\alpha$ and $\beta$ are diagonal.

\begin{table}[h]
\begin{tabular}{|l||p{4cm}||p{5cm}||p{3cm}|}
\hline
 Algebras & Multiplications & Brackets & Morphisms \\ \hline
$Alg_{1} $ & $\mu(e_{1},e_{1})=c_{11}^{1}e_{1}$,\par $\mu(e_{2},e_{2})=c_{22}^{2}e_{2}$, & $\{ e_{1},e_{1}\}=d_{11}^{1}e_{1}$,\par  $\{ e_{2},e_{1}\}=d_{21}^{1}e_{1}$,
& $\alpha = \left(
\begin{array}{cc}
0 & 0 \\
0 & 1%
\end{array}
\right),$ \par$\beta = \left(
\begin{array}{cc}
0 & 0 \\
0 & 1%
\end{array}
\right).$ \\ \hline
$Alg_{2}$ &  $
\mu(e_{1},e_{1})=c_{11}^{1}e_{1},$ &  $\{e_{1},e_{1}\}=d_{11}^{1}e_{1},$ \par $\{e_{2},e_{1}\}=e_{1}$, & $\alpha =
\left(
\begin{array}{cc}
0 & 0 \\
0 & a_{22}%
\end{array}
\right),$ \par $\beta = \left(
\begin{array}{cc}
0 & 0 \\
0 & b_{22}%
\end{array}
\right).$ \\ \hline
$Alg_{3}$ &  $%
\mu(e_{1},e_{1})=c_{11}^{1}e_{1}$,\par $\mu(e_{2},e_{2})=c_{22}^{2}e_{2},$ &  $\{ e_{1},e_{2}\}=d_{12}^{2}e_{2}$,\par $\{ e_{2},e_{2}\}=d_{22}^{2}e_{2},$ & $\alpha = \left(
\begin{array}{cc}
1 & 0 \\
0 & 0%
\end{array}
\right),$\par $\beta = \left(
\begin{array}{cc}
1 & 0 \\
0 & 0%
\end{array}
\right).$ \\ \hline
$Alg_{4}$ & $\mu(e_{2},e_{2})=c_{22}^{2}e_{2}$,  & $\{e_{1},e_{2}\}=e_{2}$, \par$\{e_{2},e_{2}\}=d_{22}^{2}e_{2}$, & $\alpha = \left(
\begin{array}{cc}
a_{11} & 0 \\
0 & 0%
\end{array}
\right),$ \par$\beta = \left(
\begin{array}{cc}
b_{11} & 0 \\
0 & 0%
\end{array}
\right).$ \\ \hline
$Alg_{5}$ &  $%
\mu(e_{2},e_{2})=c_{22}^{2}e_{2},$ &  $\{e_{1},e_{1}\}=e_{1}$,\par$\{e_{1},e_{2}\}=d_{12}^{1}e_{1}$,\par$\{e_{2},e_{1}\}=d_{21}^{1}e_{1}$,  & $\alpha = \left(
\begin{array}{cc}
0 & 0 \\
0 & 1%
\end{array}
\right),$\par $\beta = \left(
\begin{array}{cc}
0 & 0 \\
0 & 1%
\end{array}
\right).$ \\ \hline

$Alg_{6}$ &  $\mu(e_{2},e_{1})= c_{21}^{1}e_{1}$, \par $\mu(e_{2},e_{2})= \frac{c_{21}^{1}}{b_{11}}e_{2}$,  & $\{ e_{2},e_{1}\}=d_{21}^{1}e_{1}$,  & $\alpha = \left(
\begin{array}{cc}
0 & 0 \\
0 & 1%
\end{array}
\right),$ \par$\beta = \left(
\begin{array}{cc}
b_{11} & 0 \\
0 & 1%
\end{array}
\right).$ \\ \hline
$Alg_{7}$ & $\mu(e_{1},e_{1})=c_{11}^{1}e_{1}$, & $\{ e_{1},e_{2}\}=d_{12}^{2}e_{2}$,\par $\{ e_{2},e_{1}\}=d_{21}^{2}e_{2}$, \par$\{ e_{2},e_{2}\}=e_{2}$, & $\alpha = \left(
\begin{array}{cc}
1 & 0 \\
0 & 0%
\end{array}
\right),$\par $\beta = \left(
\begin{array}{cc}
1 & 0 \\
0 & 0%
\end{array}
\right).$ \\ \hline
$Alg_{8}$ & $\mu(e_{1},e_{1})=c_{11}^{1}e_{1}$,  &  $\{e_{1},e_{2}\}=d_{12}^{2}e_{2}$,  & $\alpha = \left(
\begin{array}{cc}
1 & 0 \\
0 & 0%
\end{array}
\right),$\par $\beta = \left(
\begin{array}{cc}
1 & 0 \\
0 & b_{22}%
\end{array}
\right).$ \\ \hline
$Alg_{9}$ & $\mu(e_{1},e_{1})=c_{11}^{1}e_{1}$, \par $\mu(e_{1}, e_{2})=c_{11}^{1}b_{22}e_{2}$,& $\{e_{1},e_{2}\}=d_{12}^{2}e_{2}$,   & $\alpha = \left(
\begin{array}{cc}
1 & 0 \\
0 & 0%
\end{array}
\right),$\par $\beta = \left(
\begin{array}{cc}
1 & 0 \\
0 & b_{22} %
\end{array}
\right).$ \\ \hline
$Alg_{10}$ & $\mu(e_{1},e_{1})=c_{11}^{1}e_{1}$,  & $\{e_{2},e_{1}\}=d_{21}^{2}e_{2}$, & $\alpha = \left(
\begin{array}{cc}
1 & 0 \\
0 & a_{22}%
\end{array}
\right),$ \par$\beta = \left(
\begin{array}{cc}
1 & 0 \\
0 & 0 %
\end{array}
\right).$ \\ \hline
\end{tabular}%
\end{table}
 \clearpage

\begin{table}[h]
\begin{tabular}{|l||p{4cm}||p{5cm}||p{3cm}|}
\hline

$Alg_{11} $ & $\mu(e_{1},e_{1})=c_{11}^{1}e_{1}$, \par $\mu(e_{2},e_{1})=c_{11}^{1}a_{22}e_{2}$, & $\{ e_{2},e_{1}\}=d_{21}^{2}e_{2}$,
& $\alpha = \left(
\begin{array}{cc}
1 & 0 \\
0 & a_{22}%
\end{array}
\right),$ \par$\beta = \left(
\begin{array}{cc}
1 & 0 \\
0 & 0%
\end{array}
\right).$ \\ \hline
$Alg_{12}$ &  $%
\mu(e_{2},e_{2})=c_{22}^{2}e_{2},$ &   $\{e_{1},e_{2}\}=d_{12}^{1}e_{1}$, & $\alpha =
\left(
\begin{array}{cc}
a_{11} & 0 \\
0 & 1%
\end{array}
\right),$ \par $\beta = \left(
\begin{array}{cc}
0 & 0 \\
0 & 1%
\end{array}
\right).$ \\ \hline
$Alg_{13}$ &  $%
\mu(e_{1},e_{2})=c_{12}^{1}e_{1}$,\par $\mu(e_{2},e_{2})=\frac{c_{12}^{1}}{a_{11}}e_{2},$ &  $\{ e_{1},e_{2}\}=d_{12}^{1}e_{1}$, & $\alpha = \left(
\begin{array}{cc}
a_{11} & 0 \\
0 & 1%
\end{array}
\right),$ \par $\beta = \left(
\begin{array}{cc}
0 & 0 \\
0 & 1%
\end{array}
\right).$ \\ \hline
$Alg_{14}$ & $\mu(e_{2},e_{2})=c_{22}^{2}e_{2}$,  & $\{e_{1},e_{1}\}=e_{1}$,  \par$\{e_{2},e_{1}\}=d_{21}^{1}e_{1}$, & $\alpha = \left(
\begin{array}{cc}
0 & 0 \\
0 & 1%
\end{array}
\right),$ \par $\beta = \left(
\begin{array}{cc}
1 & 0 \\
0 & 1%
\end{array}
\right).$ \\ \hline
$Alg_{15}$ &  $\mu(e_{2},e_{1})=c_{21}^{1}e_{1}$,\par $\mu(e_{2},e_{2})=c_{21}^{1}e_{2},$ &  $\{e_{1},e_{1}\}=e_{1}$,\par $\{e_{2},e_{1}\}=d_{21}^{1}e_{1}$,  & $\alpha = \left(
\begin{array}{cc}
0 & 0 \\
0 & 1%
\end{array}
\right),$ \par $\beta = \left(
\begin{array}{cc}
1 & 0 \\
0 & 1%
\end{array}
\right).$ \\ \hline

$Alg_{16}$ &  \par $\mu(e_{2},e_{2})= c_{22}^{2}e_{2}$,  & $\{ e_{2},e_{1}\}=d_{21}^{1}e_{1}$,  & $\alpha = \left(
\begin{array}{cc}
0 & 0 \\
0 & 1%
\end{array}
\right),$ \par $\beta = \left(
\begin{array}{cc}
b_{11} & 0 \\
0 & 1%
\end{array}
\right).$ \\ \hline
$Alg_{17}$ & $\mu(e_{1},e_{1})=c_{11}^{1}e_{1}$, & $\{ e_{1},e_{2}\}=d_{12}^{2}e_{2}$,\par $\{ e_{2},e_{2}\}=e_{2}$, & $\alpha = \left(
\begin{array}{cc}
1 & 0 \\
0 & 0%
\end{array}
\right),$ \par $\beta = \left(
\begin{array}{cc}
1 & 0 \\
0 & 1%
\end{array}
\right).$ \\ \hline
$Alg_{18}$ & $\mu(e_{1},e_{1})=c_{11}^{1}e_{1}$,\par $\mu(e_{1},e_{2})=c_{11}^{1}e_{2}$, &  $\{e_{1},e_{2}\}=d_{12}^{2}e_{2}$,\par $\{e_{2},e_{2}\}=e_{2}$,  & $\alpha = \left(
\begin{array}{cc}
1 & 0 \\
0 & 0%
\end{array}
\right),$ \par $\beta = \left(
\begin{array}{cc}
1 & 0 \\
0 & 1%
\end{array}
\right).$ \\ \hline
$Alg_{19}$ & $\mu(e_{2},e_{2})=c_{22}^{2}e_{2}$,  & $\{e_{1},e_{1}\}=e_{1}$, \par $\{e_{1},e_{2}\}=d_{12}^{1}e_{1}$,   & $\alpha = \left(
\begin{array}{cc}
1 & 0 \\
0 & 1%
\end{array}
\right),$ \par $\beta = \left(
\begin{array}{cc}
0 & 0 \\
0 & 1 %
\end{array}
\right).$ \\ \hline
$Alg_{20}$ & $\mu(e_{1},e_{1})=c_{11}^{1}e_{1}$,  & $\{e_{2},e_{1}\}=d_{21}^{2}e_{2}$,\par $\{e_{2},e_{2}\}=e_{2}$, & $\alpha = \left(
\begin{array}{cc}
1 & 0 \\
0 & 1%
\end{array}
\right),$ \par $\beta = \left(
\begin{array}{cc}
1 & 0 \\
0 & 0 %
\end{array}
\right).$ \\ \hline
\end{tabular}%
\end{table}
 \clearpage

\end{document}